\numberwithin{equation}{section}
\newtheorem{theorem}{Theorem}
\numberwithin{theorem}{section}
\newtheorem{proposition}[theorem]{Proposition}
\newtheorem{lemma}[theorem]{Lemma}
\newtheorem{corollary}[theorem]{Corollary}
\theoremstyle{definition}
\newtheorem{rmk}[theorem]{Remark}
\newenvironment{remark}[1][]{\begin{rmk}[#1] \pushQED{\qed}}{\popQED \end{rmk}}
\newtheorem{eg}[theorem]{Example}
\newtheorem{defn}[theorem]{Definition}
\newcommand{\Area}{\operatorname{Area}}
\newcommand{\bA}{\mathbf{A}}
\newcommand{\bC}{\mathbf{C}}
\newcommand{\cE}{\mathcal{E}}
\newcommand{\fE}{\mathfrak{E}}
\newcommand{\bG}{\mathbf{G}}
\newcommand{\bP}{\mathbf{P}}
\newcommand{\bQ}{\mathbf{Q}}
\newcommand{\bR}{\mathbf{R}}
\newcommand{\cX}{\mathcal{X}}
\newcommand{\cY}{\mathcal{Y}}
\newcommand{\bZ}{\mathbf{Z}}
\renewcommand{\phi}{\varphi}
\newcommand{\ol}[1]{\overline{#1}}
\def\Ddots{\mathinner{\mkern1mu\raise\p@
\vbox{\kern7\p@\hbox{.}}\mkern2mu
\raise4\p@\hbox{.}\mkern2mu\raise7\p@\hbox{.}\mkern1mu}}
\DeclareMathOperator{\Aut}{Aut}
\DeclareMathOperator{\sgn}{sgn}
\newcommand{\tors}{\mathrm{tors}}
\DeclareMathOperator{\val}{val}
\title{Counting elliptic curves with prescribed torsion}
\author{Robert Harron}
\address[Harron]{
Department of Mathematics\\
Van Vleck Hall\\
University of Wisconsin--Madison\\
Madison, WI 53706\\
USA
}
\email{rharron@math.wisc.edu}
\author{Andrew Snowden}
\address[Snowden]{
Department of Mathematics\\
East Hall\\
University of Michigan\\
Ann Arbor, MI 48109\\
USA
}
\email{asnowden@umich.edu}
\date{\today}
\subjclass[2010]{Primary: 11G05; Secondary: 14G05, 14G25}
\begin{document}

\begin{abstract}
Mazur's theorem states that there are exactly 15 possibilities for the torsion subgroup of an elliptic curve over the rational numbers. We determine how often each of these groups actually occurs. Precisely, if $G$ is one of these 15 groups, we show that the number of elliptic curves up to height $X$ whose torsion subgroup is isomorphic to $G$ is on the order of $X^{1/d}$, for some number $d=d(G)$ which we compute.
\end{abstract}

\maketitle
\tableofcontents
\vskip 1em

\section{Introduction}

The arithmetic of elliptic curves has long been a central area of study in number theory. One of the first general results, due to Mordell \cite{mordell} (see also \cite[Ch.~VIII]{silverman}), states that the group of rational points of an elliptic curve is finitely generated. In other words, if $E/\bQ$ is an elliptic curve then we can write $E(\bQ)=\bZ^r \times E(\bQ)_{\tors}$, where $r \ge 0$ is an integer (the \emph{rank} of $E$), and $E(\bQ)_{\tors}$ is a finite group (the \emph{torsion subgroup} of $E$). Given this result, one would like to understand what the possibilities are for the rank and torsion subgroup. Little is known regarding the rank. On the other hand, work of several mathematicians (most notably Mazur \cite{mazur}) established that there are only 15 possibilities for the torsion subgroup, namely:
\begin{equation}
\label{mazur}
\begin{aligned}
\bZ/N\bZ & \quad \textrm{with $1 \le N \le 10$ or $N=12$} \\
\bZ/2\bZ \times \bZ/N\bZ & \quad \textrm{with $N=2,4,6,8$.}
\end{aligned}
\end{equation}
With this classification in hand, it is natural to ask a more refined question: how often does each of these groups occur? Our purpose here is to provide an answer.

\subsection{The main result}

To make the question precise, we formulate it as a counting problem. Every elliptic curve $E$ over $\bQ$ admits a unique equation of the form
\begin{displaymath}
y^2=x^3+Ax+B
\end{displaymath}
where $A$ and $B$ are integers and $\gcd(A^3,B^2)$ is not divisible by any twelfth powers. We call such an equation \emph{minimal}, and define the (\emph{na\"{i}ve}) \emph{height} of $E$ to be $\max(|A|^3, |B|^2)$. Clearly, there are only finitely many elliptic curves (up to isomorphism) of height $<X$, for any real number $X$. We can therefore introduce a meaningful counting function: if $G$ is one of the groups in \eqref{mazur}, we let $N_G(X)$ be the number of (isomorphism classes of) elliptic curves $E/\bQ$ of height at most $X$ for which $E(\bQ)_{\tors}$ is isomorphic to $G$. The following theorem is our main result.

\begin{theorem}
\label{thm:main}
For any group $G$ in \eqref{mazur}, the limit
\begin{displaymath}
\frac{1}{d(G)}=\lim_{X \to \infty} \frac{\log{N_G(X)}}{\log{X}}
\end{displaymath}
exists. The value of $d(G)$ is as indicated in Table~\ref{f:main}.
\end{theorem}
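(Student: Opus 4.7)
The plan is to reduce the counting problem for each $G$ to counting rational points on a modular curve with a suitable twisted height, and then carry out the point-count by an elementary Schanuel-type argument on $\bP^1$.

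First, for each of the 15 groups $G$ in Mazur's list, the modular curve $Y_1(G)$ parametrizing elliptic curves equipped with a point (or subgroup) generating $G$ is a rational curve admitting a rational point, hence is isomorphic to an open subset of $\bP^1_\bQ$. I would fix, once and for all, an explicit Tate-normal form $E_t: y^2 = x^3 + A(t)x + B(t)$ with $A(t), B(t) \in \bQ(t)$ such that $E_t(\bQ)_{\tors} \supseteq G$ for generic $t$; such parametrizations are classical and can be read off from, e.g., Kubert's tables. The map $t \mapsto E_t$ has finite fibers (since $j$-invariant determines the isomorphism class up to quadratic twist, and the torsion structure rigidifies enough of the twist), so up to a bounded multiplicative error the count $N_G(X)$ equals the number of $t \in \bQ$ for which the \emph{minimal model} of $E_t$ has height at most $X$.

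Next, I would write $t = p/q$ with $\gcd(p,q)=1$ and homogenize to obtain polynomials $\wt A(p,q), \wt B(p,q) \in \bZ[p,q]$ of some degrees $a, b$, so that $E_t$ is $\bQ$-isomorphic to $y^2 = x^3 + \wt A(p,q) x + \wt B(p,q)$. To pass to the minimal model one applies a substitution $(x,y) \mapsto (u^{-2}x, u^{-3}y)$ to divide out the largest $u \in \bQ^\times$ with $u^4 \mid \wt A$ and $u^6 \mid \wt B$, equivalently the largest rational whose twelfth power divides $\gcd(\wt A^3, \wt B^2)$. Writing $D(p,q)$ for the ``inevitable'' twelfth-power divisor that appears generically (coming from fixed polynomial factors of $\wt A^3$ and $\wt B^2$ and from primes dividing discriminants of the parametrization), the minimal-model height is
\[
H(E_t) \;\asymp\; \max\!\bigl(|\wt A(p,q)|^3, |\wt B(p,q)|^2\bigr) \, / \, D(p,q),
\]
up to a bounded correction from finitely many primes of additive reduction. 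For generic $(p,q)$ this simplifies to an expression of the form $C \cdot \max(|p|,|q|)^{d(G)}$ for a specific exponent $d(G) = \max(3a,2b) - \deg D$. Counting coprime pairs $(p,q)$ with $\max(|p|,|q|) \leq X^{1/d(G)}$ then gives $N_G(X) \asymp X^{1/d(G)}$ by the standard Schanuel estimate on $\bP^1$, and in particular $(\log N_G(X))/\log X \to 1/d(G)$.

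Two cleanup steps remain. First, I need to upgrade ``torsion $\supseteq G$'' to ``torsion $= G$'' by inclusion–exclusion over the finitely many groups $G'$ in Mazur's list properly containing $G$; since $d(G') > d(G)$ in every such case (the modular curve $Y_1(G')$ has a smaller universal family, or is of positive genus contributing only finitely many points, as e.g.\ for $\bZ/11\bZ$), these corrections are negligible. Second, I need to check that different rational $t$ giving isomorphic elliptic curves are handled correctly, which amounts to bounding the degree of the forgetful map $Y_1(G) \to Y(1)$ times the number of twists compatible with the fixed torsion structure, a bounded factor.

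The main obstacle is the computation of the exponent $d(G)$ itself: precisely determining the generic twelfth-power divisor $D(p,q)$ for each of the 15 families requires careful case-by-case arithmetic, as the cancellations between $\wt A^3$ and $\wt B^2$ can be subtle and differ between groups (and at finitely many ``bad'' primes where extra cancellation occurs on a thin set, one must verify that the contribution is of strictly lower order so as not to affect the leading exponent). I expect the bulk of the work, and the content of Table~\ref{f:main}, to consist of this explicit computation for each $G$.
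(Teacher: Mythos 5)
Your plan has the right skeleton (parametrize by the modular curve, pass to minimal models, inclusion--exclusion over larger torsion groups), and that last reduction matches the paper. But the central analytic step is wrong as stated. You propose to write $t=p/q$ in lowest terms, homogenize, and account for non-minimality by a fixed ``generic'' twelfth-power divisor $D(p,q)$, discarding extra cancellation on a thin set as lower order. In these families the twelfth-power content of $\gcd(\wt A^3,\wt B^2)$ is governed by the multiplicative structure of the denominator $q$ (whether it is a perfect $m$-th power, where $\max(\deg f/4,\deg g/6)=n/m$), not by fixed polynomial factors, and the twist $u$ carrying you to the minimal model must be treated as a free parameter. Concretely, for $G=\bZ/4\bZ$ one has $\deg f=2$, $\deg g=3$; with the obvious choice $u=q$ the height of $E_{p/q}$ is $\asymp |q|^6\max(|p|,|q|)^6$, and counting coprime $(p,q)$ with this at most $X$ gives only about $X^{1/6}\log X$ curves, whereas the true count is $X^{1/4}$: the dominant contribution comes from $t=a/b^2$ with square denominator, where one may take $u=b$, the height drops to $\asymp\max(|a|,|b|^2)^6$, and the region $|a|\lesssim X^{1/6}$, $|b|\lesssim X^{1/12}$ contains $\asymp X^{1/4}$ points. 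The same phenomenon makes cube denominators dominate for $\bZ/3\bZ$ (true exponent $1/3$, strictly larger than what the ``generic'' parametrization yields). So the thin set you propose to dismiss is exactly what determines $d(G)$ whenever $m>1$, and for the upper bound one must also rule out further savings from other twists; the paper does this prime by prime, pinning down $\val_p(u)$ in terms of $\val_p(t)$ and showing $t=a/b^m$, $u=qb^n$ with $q$ in a finite set and $|a|\lesssim X^{m/12n}$, $|b|\lesssim X^{1/12n}$, whence the exponent $(m+1)/12n$. There is also an internal slip: coprime pairs with $\max(|p|,|q|)\le X^{1/d}$ number $\asymp X^{2/d}$ by Schanuel, not $X^{1/d}$, so even in the $m=1$ cases your bookkeeping of the exponent is off by this factor of two.

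Separately, your rigidification claim fails precisely for the three groups with $2G=0$. Rational $2$-torsion (and, for $G=0$, the empty condition) is preserved by every quadratic twist, so there is no universal curve over a one-parameter base and the twist is a genuinely free second parameter rather than a bounded ambiguity; for instance $N_0(X)\asymp X^{5/6}$ and $N_{\bZ/2\bZ}(X)\asymp X^{1/2}$. These cases need a different argument: the paper counts $G=0$ and $G=\bZ/2\bZ$ by direct two-variable lattice-point estimates in explicit regions, and handles $\bZ/2\bZ\times\bZ/2\bZ$ by a variant of the family count in which the twist enters with weights $2,3$ (i.e.\ $A=u^2f(t)$, $B=u^3g(t)$), giving exponent $(m+1)/6n$ rather than $(m+1)/12n$. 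Finally, a small point you would eventually hit: for $G=\bZ/3\bZ$ the moduli problem has an exceptional point (curves $y^2=x^3+b^2$ with extra automorphisms), which must be split off and counted separately (it contributes only $O(X^{1/4})$).
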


\begin{table}[!h]
\caption{The values of $d(G)$.}
{\tabulinesep=1.2mm
\begin{tabu}{|c|c||c|c||c|c|}
\hline
$G$ & $d$ & $G$ & $d$ & $G$ & $d$ \\
\hline
\hline
0 & 6/5 & $\bZ/6\bZ$ & 6 & $\bZ/12\bZ$ & 24 \\
\hline
$\bZ/2\bZ$ & 2 & $\bZ/7\bZ$ & 12 & $\bZ/2\bZ \times \bZ/2\bZ$ & 3 \\
\hline
$\bZ/3\bZ$ & 3 & $\bZ/8\bZ$ & 12 & $\bZ/2\bZ \times \bZ/4\bZ$ & 6 \\
\hline
$\bZ/4\bZ$ & 4 & $\bZ/9\bZ$ & 18 & $\bZ/2\bZ \times \bZ/6\bZ$ & 12 \\
\hline
$\bZ/5\bZ$ & 6 & $\bZ/10\bZ$ & 18 & $\bZ/2\bZ \times \bZ/8\bZ$ & 24 \\
\hline
\end{tabu}}
\label{f:main}
\end{table}

Since $d(0)<d(G)$ for all non-trivial $G$, we recover the following well-known result due to Duke \cite{duke}.

\begin{corollary}
Almost all elliptic curves over $\bQ$ have trivial torsion.
\end{corollary}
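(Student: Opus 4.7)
The corollary follows immediately from Theorem~\ref{thm:main}, so the plan is essentially a bookkeeping argument. First I would observe that, by Mazur's theorem, every elliptic curve $E/\bQ$ has torsion subgroup isomorphic to exactly one of the 15 groups listed in \eqref{mazur}, so that the total number of elliptic curves of height at most $X$ equals
\[
N_{\mathrm{tot}}(X) = \sum_G N_G(X),
\]
where the sum runs over the groups in \eqref{mazur}.

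Next I would invoke Theorem~\ref{thm:main} together with Table~\ref{f:main} to compare growth rates. The entry $d(0)=6/5$ means $N_0(X) = X^{5/6+o(1)}$. For every nontrivial $G$ in \eqref{mazur}, inspection of the table shows $d(G) \ge 2$, so
\[
\frac{1}{d(G)} \le \frac{1}{2} < \frac{5}{6} = \frac{1}{d(0)},
\]
and therefore $N_G(X) = X^{1/d(G)+o(1)} = o(N_0(X))$. Summing over the finitely many nontrivial $G$ gives
\[
\frac{N_{\mathrm{tot}}(X) - N_0(X)}{N_0(X)} = \sum_{G \ne 0} \frac{N_G(X)}{N_0(X)} \longrightarrow 0,
\]
so that $N_0(X)/N_{\mathrm{tot}}(X) \to 1$, which is the meaning of \emph{almost all}.

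There is no real obstacle here: the corollary is a purely formal consequence of Theorem~\ref{thm:main}. The only thing to check is the numerical comparison in the table, and the fact that $d(0) < d(G)$ for every nontrivial $G$ is visible at a glance. (Note that this deduction also recovers the classical estimate $N_{\mathrm{tot}}(X) \asymp X^{5/6}$ as a byproduct.)
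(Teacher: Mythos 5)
Your argument is correct and is essentially the paper's own (implicit) proof: the corollary is deduced in one line from Theorem~\ref{thm:main} and the observation that $d(0)<d(G)$ for all non-trivial $G$, exactly as you spell out. One small caveat on your parenthetical remark: Theorem~\ref{thm:main} alone only gives $N_{\mathrm{tot}}(X)=X^{5/6+o(1)}$, and the sharper bound $N_{\mathrm{tot}}(X)\asymp X^{5/6}$ requires the refined two-sided estimates of \S\ref{refined}, but this does not affect the corollary.
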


\subsection{Refined results}
\label{refined}

We actually prove a stronger result than Theorem~\ref{thm:main}: given $G$ as in \eqref{mazur}, there exist positive constants $K_1$ and $K_2$ such that
\begin{displaymath}
K_1 X^{1/d(G)} \le N_G(X) \le K_2 X^{1/d(G)}
\end{displaymath}
holds for all $X \ge 1$. This suggests that the limit
\begin{displaymath}
c(G)=\lim_{X \to \infty} \frac{N_G(X)}{X^{1/d(G)}}
\end{displaymath}
might exist. We prove this is the case for $\#G\leq3$.

\begin{theorem}
\label{thm:asymptotics}
Write $c_N$ for $c(\bZ/N\bZ)$. Then the limits defining $c_1$, $c_2$, and $c_3$ exist, and
\begin{align*}
c_1 &= \frac{4}{\zeta(10)} \approx 3.9960, \\
c_2 &= \frac{1}{\zeta(6)} \left( 2\log(\alpha_-/\alpha_+)+\tfrac{4}{3} (\alpha_++\alpha_-) \right) \approx 3.1969, \\
c_3 &= \frac{1}{\zeta(4)} \left( 2I_+-2I_-+\frac{1}{3}\log\left(\displaystyle\frac{\beta_0\beta_1\beta_5}{\beta_2\beta_3\beta_4}\right)+\frac{9}{4}\left(\beta_0^4+\beta_2^4+\beta_4^4-\beta_1^4-\beta_3^4-\beta_5^4\right) \right) \approx 1.5221,
\end{align*}
where the $\alpha$'s and $\beta$'s are algebraic numbers and the $I$'s are hyperelliptic integrals (see \S\ref{sec:asymptotics}).
\end{theorem}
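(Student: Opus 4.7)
The plan is, for each $G \in \{0, \bZ/2\bZ, \bZ/3\bZ\}$, to parametrize elliptic curves with $G \subseteq E(\bQ)_\tors$ by integer tuples, enforce minimality of the Weierstrass model by a M\"obius sieve over primes, and identify the leading term of the resulting lattice count with a scaled area integral. Contributions from curves with strictly larger torsion are $O(X^{1/d(G')})$ with $d(G') > d(G)$ by Theorem~\ref{thm:main}, hence absorbed into the error. For $c_1$, the minimal Weierstrass bijection sends elliptic curves to pairs $(A, B) \in \bZ^2$ with $\gcd(A^3, B^2)$ twelfth-power-free. The height box $|A| \le X^{1/3}, |B| \le X^{1/2}$ contains $4X^{5/6} + O(X^{1/2})$ lattice points, and the sieve at each prime excludes the event ``$p^4 \mid A$ and $p^6 \mid B$,'' of local density $p^{-10}$, producing a global factor $\prod_p(1 - p^{-10}) = 1/\zeta(10)$; subtracting the $O(X^{1/2})$ curves with nontrivial torsion yields $c_1 = 4/\zeta(10)$.

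For $c_2$, a 2-torsion point $(s, 0)$ on $y^2 = x^3 + Ax + B$ forces the factorization $(x - s)(x^2 + sx + t)$, so $(A, B) = (t - s^2, -st)$. Rescaling $s = X^{1/6}\sigma, t = X^{1/3}\tau$ introduces Jacobian $X^{1/2}$ and turns the height conditions into $|\tau - \sigma^2| \le 1$ and $|\sigma\tau| \le 1$. For $\sigma > 0$, the $\tau$-width of the admissible region is $2$ on $[0, \alpha_+]$ and $1/\sigma - (\sigma^2 - 1)$ on $[\alpha_+, \alpha_-]$, where $\alpha_\pm$ are the positive real roots of $\sigma^3 \pm \sigma = 1$. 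Integration using the identities $\alpha_\pm^3 = 1 \mp \alpha_\pm$ gives total area $\tfrac{4}{3}(\alpha_+ + \alpha_-) + 2\log(\alpha_-/\alpha_+)$ after doubling for $\sigma < 0$. The local minimality condition on $(t - s^2, -st)$ amounts to forbidding $v_p(s) \ge 2$ and $v_p(t) \ge 4$, of $p$-adic density $p^{-6}$, whence the sieve factor $1/\zeta(6)$. Double-counting by curves with full 2-torsion is $O(X^{1/3})$, negligible.

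For $c_3$, parametrize curves with a rational 3-torsion point via the Tate normal form $E: y^2 + axy + by = x^3$, which yields $c_4 = a^4 - 24 ab$, $c_6 = -(a^6 - 36 a^3 b + 216 b^2)$ and short Weierstrass $(A, B) = (-27 c_4, -54 c_6)$ after dividing by the minimizing scalar at each prime. Rescaling $a = X^{1/12} u, b = X^{1/4} v$ (Jacobian $X^{1/3}$) converts the height constraints to $|u^4 - 24 uv|^3 \le \mathrm{const}$ and $|u^6 - 36 u^3 v + 216 v^2|^2 \le \mathrm{const}$, bounded by a quartic and a sextic in $(u, v)$. The six real intersection points of these boundary curves project to the algebraic numbers $\beta_0 < \cdots < \beta_5$, and decomposing the area over the arcs between them produces: an algebraic piece $\tfrac{9}{4}(\beta_0^4 + \beta_2^4 + \beta_4^4 - \beta_1^4 - \beta_3^4 - \beta_5^4)$ from the arcs where one boundary dominates and integrates as a polynomial, a logarithmic piece $\tfrac{1}{3}\log(\beta_0\beta_1\beta_5/\beta_2\beta_3\beta_4)$ from the arcs that integrate via $\log$, and the two residual hyperelliptic integrals $I_\pm$. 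The local analysis of the minimizing scalar yields the sieve factor $1/\zeta(4)$.

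The main obstacle is the $c_3$ case: the Tate-form map $(a, b) \mapsto (A, B)$ requires a careful prime-by-prime analysis of the minimizing scalar to justify the density factor $1/\zeta(4)$, and the area integral unavoidably involves honest hyperelliptic integrals since the boundary curves now have positive genus. The $c_1$ case is elementary, and the $c_2$ case reduces to a two-piece elementary integration whose evaluation matches the bracketed expression in the theorem exactly.
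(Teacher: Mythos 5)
Your treatments of $c_1$ and $c_2$ are sound and essentially coincide with the paper's: for $c_2$ your coordinates $(s,t)$ with $(A,B)=(t-s^2,-st)$ differ from the paper's $(a,b)\mapsto(a,b^3+ab)$ only by an integral, area-preserving change of variables, your area computation over $[0,\alpha_+]$ and $[\alpha_+,\alpha_-]$ reproduces $\tfrac{4}{3}(\alpha_++\alpha_-)+2\log(\alpha_-/\alpha_+)$, and your local condition ``non-minimal at $p$ iff $\val_p(s)\ge 2$ and $\val_p(t)\ge 4$'' is correct, so the M\"obius sieve gives $1/\zeta(6)$ exactly as the paper's sum $\sum_d\mu(d)\#\fE_2(d^{-12}X)$ does. (You should still dispose of the discriminant-zero locus, which the paper bounds by $O(X^{1/6})$, and justify the lattice-point-equals-area step, e.g.\ by the Principle of Lipschitz as in the paper; these are routine.)

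The genuine gap is in $c_3$, and it sits exactly where you flag it. The Tate normal form $y^2+axy+by=x^3$ determines $(a,b)$ only up to the scaling $(a,b)\mapsto(\lambda a,\lambda^3 b)$, and passing to $(A,B)=(-27c_4,-54c_6)$ introduces extraneous powers of $2$ and $3$; consequently there is no established bijection (even up to bounded fibers) between integer pairs $(a,b)$ in your rescaled region and \emph{minimal} integral models of height at most $X$ with a rational $3$-torsion point. Without that correspondence, neither claim in your sketch is justified: the region cut out by $|u^4-24uv|$ and $|u^6-36u^3v+216v^2|$ is not the region whose area gives the stated constants (your normalization differs from the one producing the $\beta_i$'s and $I_\pm$ by a non-unimodular change of variables, so the areas differ), and the sieve factor is not simply $1/\zeta(4)$ applied to that area --- the local densities at $2$ and $3$, and the $(\lambda a,\lambda^3 b)$ redundancy, must be computed and shown to compensate. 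The paper sidesteps all of this by invoking the Diophantine characterization of Garc\'ia-Selfa--Tornero: an integral curve $y^2=x^3+Ax+B$ has a rational point of order $3$ if and only if $A=6ab+27a^4$, $B=b^2-27a^6$ for some $a,b\in\bZ$. This gives an honest integral parametrization with no scalar ambiguity, so the region $R_3(X)$ is canonical, its area is the bracketed constant, and the minimality sieve is the clean sum $\sum_d\mu(d)\#\fE_3(d^{-12}X)$ yielding $1/\zeta(4)$; the overcount is controlled because (away from $ab=0$) a second parameter pair would force two independent copies of $\bZ/3\bZ$ in $E(\bQ)$, which Mazur's theorem forbids --- a multiplicity argument your sketch also omits. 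To complete your route you would have to prove a statement equivalent to the GST characterization (or carry out the prime-by-prime minimization you defer), which is the actual content of this case.
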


The case $N=1$ is straightforward; see \cite[Lemma~4.3]{brumer}. The case $N=2$ was previously carried out in \cite[\S2]{grant}. The case $N=3$ does not seem to occur in the literature. We, in fact, obtain power-saving error terms (see Theorem~\ref{lem:main_asymp_lemma} for this more precise version).

\subsection{Interpretation of $d(G)$}
\label{ss:interp}

If $f \colon \bP^1 \to \bP^1$ is a degree $d$ map, then the number of points in $f(\bP^1(\bQ))$ up to height $X$ is approximately $X^{2/d}$. This suggests that $d(G)$ should be related to the degree of the map $f \colon \cX(G) \to \cX(1)$, where here $\cX(-)$ denotes the appropriate moduli stack. When $\cX(G)$ is a scheme, Table~\ref{f:main} shows that
\begin{displaymath}
d(G)=\tfrac{1}{4} \deg(f)=\tfrac{1}{2} \deg(|f|),
\end{displaymath}
where $|f|$ denotes the map of coarse spaces. More generally, as long as $G \ne 0$, we have
\begin{displaymath}
d(G) = \tfrac{1}{2} \deg(|f|)+\tfrac{1}{2} \nu_2 + \nu_3 + \nu_\infty^\prime,
\end{displaymath}
where $\nu_2$ and $\nu_3$ are the number of elliptic points of orders 2 and 3 on $\cX(G)$ and $\nu_\infty^\prime$ is the number of irregular cusps (in the sense of \cite[\S\S 1.2 \& 2.1]{shimura}). We do not have a conceptual explanation of this formula, so it could simply be a numerical coincidence.

\begin{remark}
If the moduli stacks $\cX(G)$ and $\cX(1)$ were actually schemes then Theorem~\ref{thm:main} would follow immediately from the aforementioned fact about maps of $\bP^1$, and $d(G)$ would be half the degree of $f$. It would be desirable to have a general result for counting points in the image of a map of stacks, from which Theorem~\ref{thm:main} could be deduced; see \S \ref{s:future} for a more precise discussion.
\end{remark}

\subsection{Overview of the proof}

We now go over the main ideas in the proof of Theorem~\ref{thm:main} and make some preliminary reductions.

\subsubsection{The function $N'_G(X)$}

Let $N'_G(X)$ be the number of (isomorphism classes of) elliptic curves $E$ of height at most $X$ such that $E(\bQ)$ contains a subgroup isomorphic to $G$. We prove the following theorem.

\begin{theorem}
\label{thm:main2}
For any group $G$ in \eqref{mazur}, there exist positive constants $K_1$ and $K_2$ such that $K_1 X^{1/d(G)} \le N'_G(X) \le K_2 X^{1/d(G)}$ for all $X \ge 1$.
\end{theorem}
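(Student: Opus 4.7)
The plan is to split the problem into the trivial case $G = 0$ and the fourteen nontrivial cases from \eqref{mazur}. For $G = 0$, we have $N'_0(X) = N_1(X)$, the total number of elliptic curves of height at most $X$. The upper bound $N'_0(X) \ll X^{5/6}$ is immediate from the $O(X^{1/3})$ choices for $A$ and $O(X^{1/2})$ choices for $B$. The matching lower bound follows by incorporating the minimality condition via a standard M\"obius sieve (as in \cite[Lemma~4.3]{brumer}), yielding $d(0) = 6/5$.

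For each nontrivial $G$, the strategy is to exploit the fact that the associated modular curve $Y_1(G)$ is of genus zero over $\bQ$ with a rational point, hence birational to $\bA^1_\bQ$. I would use the Kubert--Tate normal form (or, for $G = \bZ/2\bZ$ and $G = \bZ/3\bZ$, the classical two-parameter forms $y^2 = x^3 + a_2 x^2 + a_4 x$ and $y^2 + a_1 xy + a_3 y = x^3$ respectively) to write down an explicit family $y^2 = x^3 + A(t, u) x + B(t, u)$ parametrized by $(t, u)$, where $t$ ranges over a cofinite subset of $\bQ$ (the modular parameter) and $u \in \bQ^\times$ records the twist, with $A(t, \lambda u) = \lambda^4 A(t, u)$ and $B(t, \lambda u) = \lambda^6 B(t, u)$. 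Setting $t = p/q$ and $u = r/s$ in lowest terms and clearing denominators, $A$ and $B$ become integer polynomials in $(p, q, r, s)$ of prescribed bidegrees. The counting function $N'_G(X)$ then becomes, up to a bounded multiplicative overcount from automorphisms of $E$ permuting the $G$-structures,
\begin{displaymath}
N'_G(X) \asymp \#\bigl\{ (p,q,r,s) \in \bZ^4 : \gcd(p,q) = \gcd(r,s) = 1,\ (A,B)\ \text{minimal},\ \max(|A|^3, |B|^2) \le X \bigr\},
\end{displaymath}
and $d(G)$ emerges as the scaling exponent such that this region contains $\asymp X^{1/d(G)}$ points; it is determined by the bidegrees of $A$ and $B$, matching the interpretation in \S\ref{ss:interp}.

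The principal obstacles are twofold. First, one must write down the explicit model $(A, B)$ and identify the bidegrees for each of the fourteen nontrivial groups, which is straightforward for small $N$ but becomes algebraically involved for $G = \bZ/12\bZ$ and $G = \bZ/2\bZ \times \bZ/8\bZ$; this will be handled case by case. Second, the lattice-point count itself demands a M\"obius inversion to remove the coprimality and minimality conditions with enough control on the error to furnish both matching upper and lower bounds. I would isolate a general lattice-point counting lemma for regions of the form $\max(|A|^3, |B|^2) \le X$ with $A, B$ bihomogeneous, and apply it case by case. The genuinely delicate point will be the lower bound: one must produce enough minimal tuples $(p, q, r, s)$, rather than merely discarding the minimality condition, in order to secure $N'_G(X) \ge K_1 X^{1/d(G)}$.
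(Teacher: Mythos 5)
There is a genuine gap at the heart of your reduction. After writing $t=p/q$, $u=r/s$, it is not true that ``clearing denominators, $A$ and $B$ become integer polynomials in $(p,q,r,s)$ of prescribed bidegrees'': one has $A=(r/s)^4 f(p/q)$, a ratio with denominator $s^4q^{\deg f}$, so integrality of $(A,B)$ is a strong divisibility condition linking $(r,s)$ to $(p,q)$, not a condition cutting out a nice region in $\bZ^4$. Consequently $d(G)$ is \emph{not} read off from bidegrees of a four-variable lattice count; a naive count of quadruples in the region $\max(|A|^3,|B|^2)\le X$ gives the wrong exponent. The crux of the paper's argument (Lemmas~\ref{bd2}--\ref{bd4}) is precisely that integrality plus minimality of $(A,B)$ force $\val_p(u)$ to equal $\lceil -\tfrac{n}{m}\val_p(t)\rceil$ up to a bounded error vanishing for $p\gg 0$, hence $t=a/b^m$ and $u=qb^n$ with $q$ in a \emph{finite} set: the apparent two-parameter family of pairs $(t,u)$ collapses to an essentially two-dimensional lattice count in $(a,b)$ with $|a|\lesssim X^{m/12n}$, $|b|\lesssim X^{1/12n}$, yielding the exponent $(m+1)/12n$ (note also that $n/m$ is the reduced form of $\max(r/4,s/6)$, e.g.\ for $\bZ/3\bZ$ one gets $m=3$, which no bidegree bookkeeping alone produces). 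Your proposal contains no mechanism for this collapse, and without it neither the upper bound nor the lower bound (which the paper gets by taking $u=b^n$, showing $\gcd(A^3,B^2)\mid D$ for a fixed $D$ in Lemma~\ref{bd5}, and bounding fibers by B\'ezout) is within reach; M\"obius inversion alone does not handle the minimality constraint here.

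Two further points. Your uniform weight-$(4,6)$ homogeneity in the twist parameter is wrong for the groups killed by $2$: for $\bZ/2\bZ$ and $\bZ/2\bZ\times\bZ/2\bZ$ there is no universal curve over a one-dimensional base, the structure is preserved by quadratic twists, and the correct parametrization has weights $(2,3)$ (as in Proposition~\ref{bd-var}); this changes the valuation analysis (an extra essentially free squarefree factor in $u$ appears, and the exponent formula becomes $(m+1)/6n$ with possible logarithms), so these cases need a separate treatment --- the paper handles $0$ and $\bZ/2\bZ$ by direct counting and $\bZ/2\bZ\times\bZ/2\bZ$ by the variant proposition. Finally, for $G=\bZ/3\bZ$ the moduli problem has an extra-automorphism locus (curves $y^2=x^3+b^2$ with the $3$-torsion point fixed by an automorphism), which must be counted separately as in \S\ref{ss:z3}; this is minor but your case-by-case plan should acknowledge it.
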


Theorem~\ref{thm:main} follows easily from this. Indeed, we have obvious bounds
\begin{displaymath}
N'_G(X)-\sum_{G \subsetneqq H} N'_H(X) \le N_G(X) \le N'_G(X).
\end{displaymath}
As $d(G)<d(H)$, Theorem~\ref{thm:main2} gives $N'_H(X)/N'_G(X) \to 0$ as $X \to \infty$. Thus $N_G(X)/N'_G(X) \to 1$, and so Theorem~\ref{thm:main} (and the stronger form stated in \S \ref{refined}) follows from Theorem~\ref{thm:main2}.

\subsubsection{Proof of Theorem~\ref{thm:main2} when $2G \ne 0$}
\label{sss:oddg}

Suppose that $G$ is one of the 12 groups in \eqref{mazur} for which $2G \ne 0$. Then there is a \emph{universal elliptic curve} $\cE$ over an open subset of $\bA^1$ equipped with a subgroup isomorphic to $G$. (This is not exactly true for $G=\bZ/3\bZ$, see \S \ref{ss:z3} for details.) The universal property implies that an arbitrary curve $E/\bQ$ admits a copy of $G$ in its rational points if and only if $E$ is isomorphic to $\cE_t$ for some $t \in \bQ$. We can describe $\cE$ by an equation of the form
\begin{displaymath}
y^2=x^3+f(t)x+g(t)
\end{displaymath}
where $f$ and $g$ are polynomials. In \S \ref{s:general}, we prove the following general theorem for counting elliptic curves that appear in such families.

\begin{theorem}
\label{general}
Let $f,g \in \bQ[t]$ be non-zero coprime polynomials of degrees $r$ and $s$, with at least one of $r$ or $s$ positive, and write
\begin{displaymath}
\max \left( \frac{r}{4}, \frac{s}{6} \right)=\frac{n}{m},
\end{displaymath}
with $n$ and $m$ coprime. Assume $n=1$ or $m=1$. Let $\cE$ be the family of elliptic curves defined by
\begin{displaymath}
y^2=x^3+f(t)x+g(t).
\end{displaymath}
Let $N(X)$ be the number of (isomorphism classes of) elliptic curves $E/\bQ$ of height at most $X$ for which $E \cong \cE_t$ for some $t \in \bQ$. Then there exist positive constants $K_1$ and $K_2$ such that
\begin{displaymath}
K_1 X^{(m+1)/12n} \le N(X) \le K_2 X^{(m+1)/12n}
\end{displaymath}
for all $X \ge 1$.
\end{theorem}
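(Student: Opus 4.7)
The plan is to parametrize the family by writing $t=p/q$ with $p\in\bZ$, $q\in\bZ_{>0}$, $\gcd(p,q)=1$, compute the height of the minimal Weierstrass model of $\cE_t$ as an explicit function of $(p,q)$, and then carry out a lattice-point count. Since $f,g$ are coprime with at least one of positive degree, $f^3/g^2$ is a non-constant rational function, so $\cE_t\cong\cE_{t'}$ forces the polynomial relation $f(t)^3g(t')^2=f(t')^3g(t)^2$, which for each fixed $t$ has only $O(1)$ solutions $t'$. Thus $N(X)$ equals, up to a bounded multiplicative factor, the number of coprime pairs $(p,q)$ for which $H(\cE_{p/q})\le X$.

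Let $F,G\in\bZ[x,y]$ denote the degree-$r$ and degree-$s$ homogenizations of integer rescalings of $f,g$. The (non-minimal) equation for $\cE_t$ is $y^2=x^3+(F(p,q)/q^r)x+G(p,q)/q^s$, and a twist by $u\in\bQ_{>0}$ gives $(A,B)=(u^4F(p,q)/q^r,u^6G(p,q)/q^s)$. I pick $u$ so that $v_\ell(u)$ is the smallest integer making $v_\ell(A),v_\ell(B)\ge 0$ at every prime $\ell$; an elementary $\ell$-adic check shows this automatically yields $\min(3v_\ell(A),2v_\ell(B))<12$, i.e., the minimal Weierstrass form. Let $S$ collect the primes dividing the contents and leading coefficients of $f,g$ together with $\res(f,g)$. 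For $\ell\notin S$ with $\ell\mid q$ (hence $\ell\nmid p$), the coprimality forces $v_\ell(F(p,q))=v_\ell(G(p,q))=0$, whence $v_\ell(u)=\lceil n\,v_\ell(q)/m\rceil$. Primes in $S$ and primes not dividing $q$ contribute only a bounded multiplicative factor to $u$. Writing $q=q_S\cdot w\cdot s^m$ with $q_S$ supported on $S$ and $w$ coprime to $S$ and $m$-th-power-free, this gives $u\asymp s\cdot\rad(w)$, and the identity $\max(3r,2s)=12n/m$ yields
\[
H(\cE_t)\;\asymp\; u^{12}\cdot\max(M/q,1)^{12n/m},\qquad M:=\max(|p|,q),
\]
with implied constants depending only on $f,g$; this makes $|A|^3$ and $|B|^2$ commensurate precisely because of the definition of $n/m$.

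The hypothesis $n=1$ or $m=1$ is what makes the resulting lattice-point count clean. If $m=1$, then $u\asymp q^n$ and so $H(\cE_t)\asymp M^{12n}$; the number of coprime pairs $(p,q)$ with $M\le X^{1/12n}$ is $\asymp X^{2/12n}=X^{(m+1)/12n}$, giving both bounds. If $n=1$, then on the slice $w=1$ (i.e., $q=s^m$) the bound $|p|\le s^m$ produces $\sum_{s\le X^{1/12}}\phi(s^m)\asymp X^{(m+1)/12}$ coprime pairs, yielding the lower bound; a parallel count handles $|p|>s^m$. For the upper bound one sums over $m$-th-power-free $w$, indexed by $d=\rad(w)$, a contribution of the form $\Psi_m(d)/d^{m+1}\cdot X^{(m+1)/12}$ where $\Psi_m(d)=\prod_{\ell\mid d}(\ell+\ell^2+\cdots+\ell^{m-1})$, and the Euler product $\sum_d\Psi_m(d)/d^{m+1}$ converges, giving total $\asymp X^{(m+1)/12}$.

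The main obstacle is the minimal-model bookkeeping: controlling the $O(1)$ contributions of primes in $S$ and of sporadic primes where $\gcd(F(p,q),G(p,q),q)$ is anomalously large (handled by a standard sieve bound using $\res(f,g)$), together with the lower bound on the height, which requires excluding a thin neighborhood of the roots of $f$ and $g$ on which $|F(p,q)|\ll M^r$ or $|G(p,q)|\ll M^s$ fails. Without the restriction $n=1$ or $m=1$, the valuation $\lceil n\,v_\ell(q)/m\rceil$ cannot be collapsed into a single clean product $s\cdot\rad(w)$, and the enumeration would require handling fractional exponents with a more delicate multi-parameter Euler-product analysis.
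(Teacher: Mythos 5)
Your proposal is correct and takes essentially the same approach as the paper: both arguments rest on the local computation of the minimal twist, $v_\ell(u)=\lceil \tfrac{n}{m}v_\ell(q)\rceil$ at all but finitely many primes with bounded corrections at the bad primes and at the archimedean place (via coprimality of $f,g$), and the hypothesis $n=1$ or $m=1$ enters at exactly the same point to collapse the ceiling. The only real divergence is bookkeeping at the end: you count reduced fractions $p/q$ and sum an Euler product over the $m$-th-power-free part of $q$, whereas the paper's parametrization $t=a/b^m$ (with $\gcd(a,b^m)$ free of $m$-th powers) reduces the count to the immediate box estimate $|a|\lesssim X^{m/12n}$, $|b|\lesssim X^{1/12n}$; also, your displayed $u\asymp s\cdot\rad(w)$ should read $u\asymp\bigl(s\cdot\rad(w)\bigr)^n$, as you in fact use in the $m=1$ case, and no exclusion of neighborhoods of roots is needed since coprimality already bounds $\max(|f(t)|,|g(t)|)$ below.
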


In each case of interest, the hypotheses of the theorem are satisfied. Table~\ref{f:params} lists the invariants for the various groups (see \S \ref{explan} for proofs). This establishes Theorem~\ref{thm:main2} in the cases $2G \ne 0$.

\begin{table}
\caption{Data for the universal elliptic curve with $G$-structure.}
{\small\tabulinesep=1.2mm
\begin{tabu}{|c|c|c|c|c|c|}
\hline
$G$ & $r$ & $s$ & $n$ & $m$ & $12n/(m+1)$ \\
\hline
\hline
 3 & 1 & 2 & 1 & 3 & 3 \\
\hline
 4 & 2 & 3 & 1 & 2 & 4 \\
\hline
 5 & 4 & 6 & 1 & 1 & 6 \\
\hline
 6 & 4 & 6 & 1 & 1 & 6 \\
\hline
 7 & 8 & 12 & 2 & 1 & 12 \\
\hline
 8 & 8 & 12 & 2 & 1 & 12 \\
\hline
 9 & 12 & 18 & 3 & 1 & 18 \\
\hline
10 & 12 & 18 & 3 & 1 & 18 \\
\hline
12 & 16 & 24 & 4 & 1 & 24 \\
\hline
$(2,4)$ & 4 & 6 & 1 & 1 & 6 \\
\hline
$(2,6)$ & 8 & 12 & 2 & 1 & 12 \\
\hline
$(2,8)$ & 16 & 24 & 4 & 1 & 24\\
\hline
\end{tabu}}
\label{f:params}
\end{table}

\subsubsection{Proof of Theorem~\ref{thm:main2} when $2G=0$}

If $2G=0$ then there is not a universal curve (over a scheme), and so the above method does not directly apply. When $G$ is trivial or $\bZ/2\bZ$, the situation is simple enough to understand directly (see \S \ref{sec:asymptotics}). When $G=\bZ/2\bZ \times \bZ/2\bZ$, we use a variant of Theorem~\ref{general} to understand $N'_G$ (see \S \ref{sec:Zmod2timesZmod2}). In this way, the remaining cases of Theorem~\ref{thm:main2} are established.

\subsubsection{Proof of Theorem~\ref{general}}
\label{sss:general}

We now comment on the proof of Theorem~\ref{general}, which is the heart of the paper. Recall that if $E_i$ (for $i=1,2$) are elliptic curves over $\bQ$ defined by equations
\begin{displaymath}
y^2=x^3+a_ix+b_i,
\end{displaymath}
then $E_1$ and $E_2$ are isomorphic if and only if there exists a rational number $u \in \bQ$ for which $a_1=u^4 a_2$ and $b_1=u^6 b_2$ \cite[Ch.~III \S 1]{silverman}. We therefore see that an elliptic curve $E/\bQ$ given by an equation
\begin{equation}
\label{eq3}
y^2=x^3+Ax+B
\end{equation}
belongs to a family $\cE$ as above if and only if there exist $u, t \in \bQ$ such that $A=u^4f(t)$ and $B=u^6g(t)$. It follows that $N(X)$ (as in Theorem~\ref{general}) counts the number of pairs $(A,B) \in \bZ^2$ satisfying the following conditions:
\begin{itemize}
\item $4A^3+27B^2 \ne 0$.
\item $\gcd(A^3, B^2)$ is not divisible by any 12th power.
\item $|A|<X^{1/3}$ and $|B|<X^{1/2}$.
\item There exist $u,t \in \bQ$ for which $A=u^4f(t)$ and $B=u^6g(t)$.
\end{itemize}
The first condition ensures that \eqref{eq3} defines an elliptic curve; the second that the equation is minimal; the third that it has height $<X$; and the fourth that it belongs to $\cE$. This reduces the analysis of $N(X)$ to an elementary number theory problem, which we solve directly.

\subsection{Future directions}
\label{s:future}

There are several generalizations of our results that would be of interest. The most immediate, perhaps, is to extend our results to rationally defined subgroups. Precisely, let $G$ be a product of two finite cyclic groups, and let $N^0_G(X)$ be the number of elliptic curves $E/\bQ$ up to height $X$ admitting a subgroup defined over $\bQ$ which (over $\bC$) is isomorphic to $G$. One would then like to compute the limit of $\log{N^0_G(X)}/\log{X}$. Again, there are only finitely many $G$ for which one gets a non-zero answer, namely, those for which the moduli space has genus 0.

It would also be interesting to generalize Theorem~\ref{general} to allow $f$ and $g$ to have a common factor and to remove the restriction on $n$ and $m$. How does this affect the count?

Perhaps the ultimate generalization in this direction is the following. Let $\cX$ and $\cY$ be proper smooth Deligne--Mumford stacks over $\bQ$ with coarse space $\bP^1$, and let $f \colon \cY \to \cX$ be a map. Suppose that there is a good notion of height $h_{\cX}$ on the set $|\cX(\bQ)|$, where $|\cdot|$ denotes isomorphism classes. Then one would like a formula for
\begin{displaymath}
\lim_{T \to \infty} \frac{\# \{ x \in f(|\cY(\bQ)|) \mid h_{\cX}(x) \le T \}}{\log{T}}
\end{displaymath}
in terms of invariants of $\cX$, $\cY$, and $f$ (in the style of \S \ref{ss:interp}).

More generally, one may ask these questions over general global fields. What kind of dependence is there on the base field?

In addition to these generalizations, it would be interesting to complete the results of \S \ref{refined}, and compute the value of $c(G)$ for other $G$'s.

\subsection{Notation}

If $f$ and $g$ are two functions of a real variable $X$, we write $f \lesssim g$ to mean ``there exists a positive constant $c$ such that $f(X) \le c g(X)$ holds for all $X \ge 1$.'' For a prime number $p$, we let $\val_p$ be the usual $p$-adic valuation on $\bQ$ (so $\val_p(p)=1$). For a place $p \le \infty$ of $\bQ$, we let $\vert \cdot \vert_p$ be the usual absolute value on $\bQ$ (so $\vert p \vert_p=p^{-1}$ if $p<\infty$). We often write $\vert \cdot \vert$ in place of $\vert \cdot \vert_{\infty}$. We write $\lfloor x \rfloor$ and $\lceil x \rceil$ for the floor and ceiling of $x \in \bQ$.

\subsection*{Acknowledgements}

We would like to thank Jordan Ellenberg, Wei Ho, and Melanie Matchett Wood for useful conversations, and Bjorn Poonen for pointing out an innaccuracy in an earlier version. We would also like to acknowledge the Sagemath Cloud and Sage \cite{sage} in which we carried out several helpful computations. The first author was supported by NSA Young Investigator Grant \#H98230-13-1-0223 and NSF RTG Grant ``Number
Theory and Algebraic Geometry at the University of Wisconsin''. The second author was supported by NSF Grant DMS-1303082.

\section{Counting elliptic curves in families}
\label{s:general}

The purpose of this section is to prove Theorem~\ref{general}. As explained in \S \ref{sss:general}, it is equivalent to the following proposition.

\begin{proposition}
\label{bd}
Let $f,g \in \bQ[t]$ be coprime polynomials of degrees $r$ and $s$. Assume at least one of $r$ or $s$ is positive. Write
\begin{displaymath}
\max \left( \frac{r}{4}, \frac{s}{6} \right)=\frac{n}{m}
\end{displaymath}
with $n$ and $m$ coprime. Assume $n=1$ or $m=1$. Let $S(X)$ be the set of pairs $(A,B) \in \bZ^2$ satisfying the following conditions:
\begin{itemize}
\item $4A^3+27B^2 \ne 0$.
\item $\gcd(A^3,B^2)$ is not divisible by any 12th power.
\item $|A|<X^{1/3}$ and $|B|<X^{1/2}$.
\item There exist $u,t \in \bQ$ such that $A=u^4 f(t)$ and $B=u^6 g(t)$.
\end{itemize}
Then $X^{(m+1)/12n} \lesssim \# S(X) \lesssim X^{(m+1)/12n}$.
\end{proposition}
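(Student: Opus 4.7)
The plan is to parametrize $S(X)$ by rational numbers $t = p/q \in \bQ$ (in lowest terms, $q > 0$) via the map $t \mapsto (A(t), B(t))$, where $u(t) \in \bQ_{>0}$ is chosen so that $(A,B) := (u^4 f(t), u^6 g(t))$ is an integer minimal pair. Such a $u(t)$ exists and is unique up to sign: at each prime $\ell$, integrality forces $v_\ell(u) \ge \max(-v_\ell(f(t))/4, -v_\ell(g(t))/6)$, while minimality of $\gcd(A^3,B^2)$ forces $v_\ell(u)$ to be strictly less than one plus this same lower bound, pinning $v_\ell(u)$ as the ceiling. Conversely, since $f$ and $g$ are coprime, the rational function $f^3/g^2$ is nonconstant, so every pair $(A,B)$ is hit by at most $\max(3r, 2s)$ values of $t$. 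Hence $\#S(X)$ is, up to a bounded factor, the cardinality of $\{t \in \bQ : |A(t)| < X^{1/3},\ |B(t)| < X^{1/2}\}$.

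Next I would estimate $u(t)$ in terms of $t = p/q$. For primes $\ell$ outside a fixed finite set $\cS$ depending only on $f,g$ (namely, primes dividing the leading coefficients of $f,g$ or their resultant), the local formula simplifies: $v_\ell(u(t)) = \lceil (n/m) v_\ell(q)\rceil$ if $\ell \mid q$, and $v_\ell(u(t)) = 0$ otherwise. Writing $q = w^m c$ with $c$ being $m$-th-power-free and $\gcd(w,c)=1$, this yields $u(t) \asymp w^n \operatorname{rad}(c)^n$. Combined with the standard estimates $|F(p,q)| \asymp \max(|p|,q)^r$ and $|G(p,q)| \asymp \max(|p|,q)^s$ for the homogenizations of $f$ and $g$, the height bounds translate to explicit joint constraints on the triple $(p, w, c)$.

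I would then execute the count by stratifying over $c$. In the dominant stratum $c = 1$ (so $q = w^m$), one has $u(t) = w^n$ exactly (outside $\cS$); both $A(t)$ and $B(t)$ become weighted-homogeneous expressions in $(p, w)$ of weights $4n$ and $6n$ (weighting $p$ by $m$ and $w$ by $1$), and in either subcase ($r/4 = n/m$ or $s/6 = n/m$) the height bounds collapse to $\max(|p|, w^m) \lesssim X^{m/(12n)}$, i.e., $|p| \lesssim X^{m/(12n)}$ and $w \lesssim X^{1/(12n)}$. The number of coprime pairs in this box is $\asymp X^{m/(12n)} \cdot X^{1/(12n)} = X^{(m+1)/(12n)}$, giving (via finite fibers) the desired lower bound. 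For strata with $c > 1$, the factor $\operatorname{rad}(c)^n$ tightens the bounds enough that stratum $c$ contributes $\lesssim X^{(m+1)/(12n)} / c^\kappa$ for some $\kappa > 1$, so summing in $c$ produces the matching upper bound. Excluding the discriminant locus $4A^3 + 27B^2 = 0$ removes only the $O(1)$ rational roots of the nonzero polynomial $4f^3 + 27g^2$.

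The principal obstacle is establishing the exponent $\kappa > 1$ uniformly for all non-main strata and controlling the sum $\sum_c c^{-\kappa}$. The hypothesis $n = 1$ or $m = 1$ is what makes this manageable: when $m = 1$, every $q$ is trivially a first power and the only stratum is $c = 1$; when $n = 1$, the equation $\max(r/4, s/6) = 1/m$ restricts $m \in \{1,2,3,4,6\}$ with $r, s$ in a narrow integer range, permitting a concrete case-by-case derivation of $\kappa$. The remaining complications --- the finitely many bad primes in $\cS$, the sign ambiguity in $u$, and the fiber multiplicities of $t \mapsto (A,B)$ --- all affect only multiplicative constants and are handled uniformly.
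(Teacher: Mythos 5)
Your overall architecture is essentially the paper's: you pin down $\val_\ell(u)$ as a ceiling at each prime (this is Lemma~\ref{bd2}), use coprimality of $f,g$ (via the resultant) to reduce to finitely many bad primes (Lemma~\ref{bd1}), count numerators and denominators in a box of size $X^{m/12n}\times X^{1/12n}$, and bound fibers of $t\mapsto(A,B)$ by a B\'ezout-type argument. The two organizational differences are that you build $12$th-power minimality into a canonical $u(t)$, where the paper instead produces integral pairs and divides out $12$th powers at the end (Lemma~\ref{bd5} and Lemma~\ref{lem:S3toS}), and that for the upper bound you stratify the reduced denominator $q=w^mc$ by its $m$th-power-free part $c$ and sum over strata, where the paper writes $t=a/b^m$ with $\gcd(a,b^m)$ free of $m$th powers, so that $u/b^n$ lies in a finite set (Lemma~\ref{bd3}) and a single box count suffices, with no sum over $c$ at all.

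The one step that fails as written is your per-stratum bound. First, the decomposition $q=w^mc$ with $c$ $m$th-power-free \emph{and} $\gcd(w,c)=1$ does not always exist (try $q=\ell^{m+1}$, $m\ge 2$); you should take $w=\prod_\ell \ell^{\lfloor \val_\ell(q)/m\rfloor}$ and drop the coprimality. With that fixed, the height constraints give $|p|\lesssim X^{m/12n}\,c/\operatorname{rad}(c)^m$ and $w\lesssim X^{1/12n}/\operatorname{rad}(c)$, so stratum $c$ contributes $\lesssim X^{(m+1)/12n}\,c/\operatorname{rad}(c)^{m+1}$ (plus lower-order cross terms). This is \emph{not} $\lesssim X^{(m+1)/12n}c^{-\kappa}$ with $\kappa>1$ once $m\ge 3$: for $m=3$ (exactly the case needed for $G=\bZ/3\bZ$) and $c=\ell^2$ the saving is only $c^{-1}$, and for $m=4,6$ it is $c^{-2/(m-1)}<c^{-1}$, so the plan ``find $\kappa>1$ and sum $\sum_c c^{-\kappa}$'' breaks down. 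The conclusion survives because the correct saving still sums: $\sum_{c\ m\text{-power-free}} c/\operatorname{rad}(c)^{m+1}$ has Euler factor $1+\ell^{-2}+\cdots+\ell^{-m}$ at each prime, hence converges; so you should sum the quantity $c/\operatorname{rad}(c)^{m+1}$ rather than a power of $c$ (or simply adopt the paper's normalization, which absorbs $\operatorname{rad}(c)$ into $b$ and avoids strata entirely). A smaller point: the archimedean estimate $|F(p,q)|\asymp\max(|p|,q)^r$ is false in the lower-bound direction when $p/q$ is near a real root of $f$; what you actually need (and use) is the joint bound $\max(|f(t)|^{1/4},|g(t)|^{1/6})\gtrsim\max(1,|t|)^{n/m}$, which is the archimedean case of Lemma~\ref{bd1} combined with the behavior at $|t|\to\infty$.
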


\subsection{The upper bound}

We begin by establishing the upper bound. Let $S_1(X)$ be the set of pairs $(u,t) \in \bQ^2$ such that $(A,B)=(u^4f(t), u^6g(t))$ belongs to $S(X)$.

\begin{lemma}
\label{bd1}
For each place $p \le \infty$ of $\bQ$ there exists a constant $c_p>0$ such that for all $t \in \bQ$
\begin{displaymath}
\max(|f(t)|_p, |g(t)|_p) \ge c_p.
\end{displaymath}
If $p$ is a sufficiently large finite prime, one can take $c_p=1$.
\end{lemma}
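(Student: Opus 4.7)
The plan is to split into archimedean and non-archimedean places, using coprimality of $f,g$ (together with $\max(r,s)\ge1$) in each case to prevent simultaneous smallness of $|f(t)|_p$ and $|g(t)|_p$.

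\medskip

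\noindent\textbf{Archimedean place.} Coprimality in $\bQ[t]\subset\bC[t]$ means $f$ and $g$ share no complex root, hence no real root, so the continuous function $h(t):=\max(|f(t)|,|g(t)|)$ is strictly positive on all of $\bR$. The hypothesis $\max(r,s)\ge1$ makes $h(t)\to\infty$ as $|t|\to\infty$, so the infimum of $h$ over $\bR$ is attained on some compact interval and is a positive real number, which I take as $c_\infty$. Restricting to $\bQ\subset\bR$ gives the claim.

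\medskip

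\noindent\textbf{Non-archimedean places, existence of $c_p$.} Apply B\'ezout in $\bQ[t]$ and clear denominators to obtain $U,V\in\bZ[t]$ and a nonzero integer $D$ with the polynomial identity $U(t)f(t)+V(t)g(t)=D$. For any $t\in\bQ$, the ultrametric inequality yields
\begin{displaymath}
|D|_p\;\le\;\max(|U(t)|_p,|V(t)|_p)\cdot\max(|f(t)|_p,|g(t)|_p).
\end{displaymath}
When $|t|_p$ is bounded, the first factor is bounded, so the inequality produces a positive lower bound on $\max(|f(t)|_p,|g(t)|_p)$. When $|t|_p$ is large, an elementary leading-term estimate applied to whichever of $f,g$ has positive degree (at least one exists by hypothesis) gives $\max(|f(t)|_p,|g(t)|_p)\to\infty$. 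Patching the two regimes at any convenient threshold yields a positive $c_p$.

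\medskip

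\noindent\textbf{Sharpening to $c_p=1$ for large $p$.} For all sufficiently large primes $p$, the following hold: (i) all coefficients of $f,g,U,V$ are $p$-integral, (ii) the leading coefficients of $f$ and $g$ are $p$-adic units (and also the constant value, in the degenerate case that one of them has degree $0$), and (iii) $p\nmid D$. If $|t|_p\le1$, then $|U(t)|_p,|V(t)|_p\le1$, and the B\'ezout inequality forces $\max(|f(t)|_p,|g(t)|_p)\ge|D|_p=1$. If $|t|_p=p^k$ with $k\ge1$, then without loss of generality assuming $r\ge1$, every non-leading term of $f(t)$ has $p$-adic absolute value at most $p^{k(r-1)}$, strictly less than the leading term's value $p^{kr}$; the ultrametric equality then gives $|f(t)|_p=p^{kr}\ge p>1$. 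In either regime $c_p=1$ works.

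\medskip

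\noindent\textbf{Main obstacle.} There is no serious conceptual obstacle; the proof is essentially bookkeeping on B\'ezout's identity plus a leading-term analysis. The only mildly delicate point is the degenerate case where one of $f,g$ is a nonzero constant, which is absorbed uniformly into condition~(ii) by excluding the finitely many primes dividing that constant.
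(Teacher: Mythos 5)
Your proof is correct, but it proceeds differently from the paper's. The paper gives a single argument valid at every place at once: it fixes an extension of $|\cdot|_p$ to $\ol{\bQ}$, lets $\{\alpha_i\}$ and $\{\beta_j\}$ be the roots of $f$ and $g$, and uses coprimality to get a positive lower bound $\delta$ on $|\alpha_i-\beta_j|_p$; if both $|f(t)|_p$ and $|g(t)|_p$ were small, $t$ would be $\delta/2$-close to some $\alpha_i$ and some $\beta_j$, contradicting the separation of roots, and for $p\gg 0$ the same argument with all $\alpha_i-\beta_j$ being $p$-units gives $c_p=1$. You instead split by place: at the archimedean place you use positivity plus growth of $\max(|f|,|g|)$ and compactness, and at finite places you use a cleared-denominator B\'ezout identity $Uf+Vg=D$ with $U,V\in\bZ[t]$, $D\in\bZ\setminus\{0\}$, combined with a leading-term estimate for $|t|_p>1$; the condition $p\nmid D$ (together with $p$-integrality of coefficients and unit leading coefficients) then yields $c_p=1$ for large $p$. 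Both routes are sound: the paper's is more uniform across places but requires extending absolute values to $\ol{\bQ}$ (and implicitly a coefficient-dependent choice of $\epsilon$), whereas yours stays entirely over $\bQ$ and $\bZ$, makes the exceptional set of primes completely explicit (those dividing $D$, the leading coefficients, or denominators of coefficients), and the B\'ezout constant $D$ plays the role that the root separations $\alpha_i-\beta_j$ (essentially the resultant) play in the paper.
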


\begin{proof}
Let $\ol{\bQ}$ be a fixed algebraic closure of $\bQ$, and extend $\vert \cdot \vert_p$ to $\ol{\bQ}$ in any way. Let $\{\alpha_i\}$ be the roots of $f$ and $\{\beta_j\}$ the roots of $g$ in $\ol{\bQ}$. Note that $\alpha_i \ne \beta_j$ for all $i$ and $j$, since $f$ and $g$ are coprime. Let $\delta=\min(|\alpha_i-\beta_j|_p)$. Let $\epsilon>0$ be such that $|f(t)|_p<\epsilon$ implies $|t-\alpha_i|<\delta/2$ for some $i$, and similarly for $g$. If $|f(t)|_p<\epsilon$ and $|g(t)|_p<\epsilon$ then $|t-\alpha_i|_p<\delta/2$ for some $i$ and $|t-\beta_j|_p<\delta/2$ for some $j$, which implies $|\alpha_i-\beta_j|_p<\delta$, a contradiction. We must therefore have $|f(t)|_p \ge \epsilon$ or $|g(t)|_p \ge \epsilon$ for all $t$, and so can take $c_p=\epsilon$.

Now let $p$ be a prime large enough so that: (1) the coefficients of $f$ and $g$ are $p$-integral; (2) the leading coefficients of $f$ and $g$ are $p$-units; and (3) $\alpha_i-\beta_j$ are $p$-units, for all $i$ and $j$. If $|f(t)|_p<1$ then $|t-\alpha_i|<1$ for some $i$. Similarly for $g$. Thus if $|f(t)|_p<1$ and $|g(t)|_p<1$ then $|t-\alpha_i|<1$ and $|t-\beta_j|<1$, which implies $|\alpha_i-\beta_j|<1$, a contradicition. We conclude that we can take $c_p=1$ for such $p$.
\end{proof}

\begin{lemma}
\label{bd2}
For each prime number $p$ there exists a constant $C_p$ with the following property. Suppose $(u,t) \in S_1(X)$. Then
\begin{displaymath}
\val_p(u)=\epsilon+\begin{cases} \lceil -\tfrac{n}{m} \val_p(t) \rceil & \textrm{if $\val_p(t)<0$} \\
0 & \textrm{if $\val_p(t) \ge 0$,} \end{cases}
\end{displaymath}
where $|\epsilon| \le C_p$. Furthermore, one can take $C_p=0$ for $p \gg 0$.
\end{lemma}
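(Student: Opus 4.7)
The plan is to take $p$-adic valuations of $A = u^4 f(t)$ and $B = u^6 g(t)$ and bracket $\val_p(u)$ from both sides using three inputs: (i) the integrality $A, B \in \bZ$ gives $4\val_p(u) + \val_p(f(t)) \ge 0$ and $6\val_p(u) + \val_p(g(t)) \ge 0$; (ii) the hypothesis that $\gcd(A^3, B^2)$ is not divisible by any twelfth power translates, prime by prime, into $\min(3\val_p(A), 2\val_p(B)) < 12$; (iii) one controls $\val_p(f(t))$ and $\val_p(g(t))$ in terms of $\val_p(t)$. The two regimes $\val_p(t) \ge 0$ and $\val_p(t) < 0$ are handled separately.

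For $\val_p(t) \ge 0$, Lemma~\ref{bd1} bounds $\max(|f(t)|_p, |g(t)|_p)$ below by $c_p > 0$, so $\min(\val_p(f(t)), \val_p(g(t)))$ is bounded above; a matching lower bound is automatic from the coefficients of $f,g$ lying in $\bQ$. Hence both $\val_p(f(t))$ and $\val_p(g(t))$ are $O_p(1)$. Integrality then bounds $\val_p(u)$ below and (ii) bounds it above, giving $|\val_p(u)| \le C_p$. For $p$ sufficiently large, $c_p = 1$ and the coefficients and root-differences of $f,g$ are all $p$-units; then at least one of $\val_p(f(t)), \val_p(g(t))$ is exactly zero, and a short case analysis using (ii) forces $\val_p(u) = 0$.

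The substantive case is $\val_p(t) = -v < 0$. For $p$ large enough, leading-term domination gives $\val_p(f(t)) = -rv$ and $\val_p(g(t)) = -sv$ exactly (the leading coefficients being $p$-units, no $p$-adic cancellation among lower-order terms is possible when $v \ge 1$). Integrality then yields $4\val_p(u) \ge rv$ and $6\val_p(u) \ge sv$, so $\val_p(u) \ge v\max(r/4, s/6) = vn/m$ and hence $\val_p(u) \ge \lceil vn/m \rceil$. Conversely,
\[
\min(3\val_p(A), 2\val_p(B)) \;=\; 12\val_p(u) - v\max(3r, 2s) \;=\; 12\bigl(\val_p(u) - vn/m\bigr),
\]
using $\max(3r, 2s) = 12\max(r/4, s/6) = 12n/m$, so (ii) forces $\val_p(u) < 1 + vn/m$, i.e., $\val_p(u) \le \lceil vn/m \rceil$. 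The two bounds coincide and give $\val_p(u) = \lceil -\tfrac{n}{m}\val_p(t)\rceil$ with $\epsilon = 0$.

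The main obstacle is the uniform treatment at small primes, where one must quantify the error in the leading-term estimate $\val_p(f(t)) = -rv + O(1)$ (and similarly for $g$) and handle a bounded range of small $v$ where the estimate may not yet be sharp. Since $f$ and $g$ each have only finitely many coefficients, both the error and the threshold on $v$ are bounded uniformly in $t$; the resulting bounded contributions are absorbed into $\epsilon$, yielding a finite $C_p$ for each $p$, with $C_p = 0$ once $p$ is large enough for the above regime to apply (in particular, for Lemma~\ref{bd1} to give $c_p = 1$).
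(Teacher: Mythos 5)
Your argument is correct and is essentially the paper's: you use the same three inputs (integrality of $A,B$ for a lower bound on $\val_p(u)$, the twelfth-power-free condition on $\gcd(A^3,B^2)$ for the matching upper bound, and estimates on $\val_p(f(t)),\val_p(g(t))$ in terms of $\val_p(t)$), split into the same two regimes, and get $C_p=0$ for large $p$ by exact leading-term domination together with $c_p=1$ from Lemma~\ref{bd1}. The only cosmetic difference is that the paper packages your two-sided bracketing as the single exact formula $\val_p(u)=\max\bigl(\lceil -\tfrac{1}{4}\val_p(f(t))\rceil,\lceil -\tfrac{1}{6}\val_p(g(t))\rceil\bigr)$ (minimality of $\val_p(u)$), and your treatment of small primes is sketched at the same level of detail as the paper's.
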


\begin{proof}
Suppose $(u,t) \in S_1(X)$, and let $p$ be a prime. Since $A$ and $B$ are integral, we have $4\val_p(u)+\val_p(f(t)) \ge 0$ and $6\val_p(u)+\val_p(g(t)) \ge 0$. Furthermore, $\val_p(u)$ must be minimal subject to these inequalities, or else $p^{12}$ would divide $\gcd(A^3, B^2)$. We can thus write
\begin{displaymath}
\val_p(u)=\max(\lceil -\tfrac{1}{4} \val_p(f(t)) \rceil, \lceil -\tfrac{1}{6} \val_p(g(t)) \rceil).
\end{displaymath}
Equivalently,
\begin{equation}
\label{eq1}
-\val_p(u)=\min(\lfloor \tfrac{1}{4} \val_p(f(t)) \rfloor, \lfloor \tfrac{1}{6} \val_p(g(t)) \rfloor).
\end{equation}

Suppose $\val_p(t)<0$. Let $K_1$ be a constant such that $|\val_p(f(t))-r\val_p(t)|<K_1$ and $|\val_p(g(t))-s\val_p(t)|<K_1$ for all such $t$. (This exists since the functions $\val_p(f(t)/t^r)$ and $\val_p(g(t)/t^s)$ are bounded on $\bQ_p \setminus \bZ_p$.) Then for $\val_p(t)<0$ we have
\begin{displaymath}
\val_p(u)=\epsilon+\max(\lceil -\tfrac{r}{4} \val_p(t) \rceil, \lceil -\tfrac{s}{6} \val_p(t) \rceil)
\end{displaymath}
where $|\epsilon|<K_2$, for some $K_2$. (One can take $K_2=1+\tfrac{n}{m} K_1$.) Since $\val_p(t)<0$, we have
\begin{displaymath}
\max(\lceil -\tfrac{r}{4} \val_p(t) \rceil, \lceil -\tfrac{s}{6} \val_p(t) \rceil)
=\lceil -\tfrac{n}{m} \val_p(t) \rceil,
\end{displaymath}
and so
\begin{displaymath}
\val_p(u)=\epsilon+\lceil -\tfrac{n}{m} \val_p(t) \rceil,
\end{displaymath}
with $|\epsilon|<K_2$.

Now, consider $\val_p(t) \ge 0$. Let $K_3$ be a constant such that $\min(\val_p(f(t)), \val_p(g(t))) \le K_3$ for all such $t$, which exists by Lemma~\ref{bd1}. Appealing to \eqref{eq1}, we find $-\val_p(u) \le K_4$, for an appropirate $K_4$. Let $K_5$ be so that $\val_p(f(t)) \ge K_5$ and $\val_p(g(t)) \ge K_5$ for all $t$ with $\val_p(t) \ge 0$. Then appealing to \eqref{eq1} again, we find $-\val_p(u) \ge K_6$, for an appropriate $K_6$. We thus see that $|\val_p(u)|<K_7$ whenever $\val_p(t) \ge 0$, where $K_7=\max(K_4,K_6)$.

Combining the above two paragraphs, we see that the formula in the statement of the lemma holds with $C_p=\max(K_2, K_7)$.

Now suppose $p$ is large enough so that: (1) the coefficients of $f$ and $g$ are $p$-integral; (2) the leading coefficients of $f$ and $g$ are $p$-unit; and (3) the constant $c_p$ from the previous lemma can be taken to be 1. For $\val_p(t)<0$ we have equalities $\val_p(f(t))=r\val_p(t)$ and $\val_p(g(t))=s\val_p(t)$, which shows that
\begin{displaymath}
\val_p(u)=\max(\lceil -\tfrac{r}{4} \val_p(t) \rceil, \lceil -\tfrac{s}{6} \val_p(t) \rceil)
=\lceil -\tfrac{n}{m} \val_p(t) \rceil.
\end{displaymath}
For $\val_p(t) \ge 0$, we have $\val_p(f(t)) \ge 0$ and $\val_p(g(t)) \ge 0$, with at least one inequality being an equality. Thus $\val_p(u)=0$.  This proves that we can take $C_p=0$ in this case.
\end{proof}

\begin{lemma}
\label{bd3}
There exists a finite set $Q$ of non-zero rational numbers with the following property. Suppose $(u,t) \in S_1(X)$. Then we can write $t=a/b^m$, where $a$ and $b$ are integers such that $b>0$ and $\gcd(a,b^m)$ is not divisible by any $m$th power, and $u=qb^n$, with $q \in Q$.
\end{lemma}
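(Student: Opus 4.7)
The plan is to construct $(a,b)$ from $t$ using local data, define $q := u/b^n$, and verify via Lemma~\ref{bd2} that $q$ has uniformly bounded valuation at every prime $p$ and vanishing valuation at all sufficiently large $p$; combined with the sign, this forces $q$ into a finite set $Q$.

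First, for any nonzero $t \in \bQ$, I would show the decomposition $t = a/b^m$ with $b \in \bZ_{>0}$ and $\gcd(a,b^m)$ not divisible by any $m$-th power is unique. At any prime $p$ dividing $b$ one must have $\val_p(a) < m$; since $\val_p(a) = \val_p(t) + m\val_p(b)$, this forces $\val_p(t) < 0$ and $\val_p(b) = \lceil -\val_p(t)/m \rceil$, while $\val_p(b) = 0$ at primes where $\val_p(t) \ge 0$. This determines $b$ (and hence $a = tb^m$) uniquely from $t$.

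Next, set $q := u/b^n \in \bQ^\times$, which is nonzero since $u \ne 0$ (otherwise $A = B = 0$, contradicting $4A^3 + 27B^2 \ne 0$). When $\val_p(t) \ge 0$, Lemma~\ref{bd2} gives $\val_p(u) = \epsilon_p$ and $\val_p(b) = 0$, so $\val_p(q) = \epsilon_p$. When $\val_p(t) < 0$, set $v = -\val_p(t)$; Lemma~\ref{bd2} gives $\val_p(u) = \epsilon_p + \lceil nv/m \rceil$ and $n\val_p(b) = n\lceil v/m \rceil$, whence
\[
\val_p(q) \;=\; \epsilon_p + \lceil nv/m \rceil - n\lceil v/m \rceil.
\]
The hypothesis $n = 1$ or $m = 1$ enters precisely through the elementary identity $\lceil nv/m \rceil = n\lceil v/m \rceil$ (trivial in each case), yielding $\val_p(q) = \epsilon_p$ uniformly. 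Since $|\epsilon_p| \le C_p$ and $C_p = 0$ for $p \gg 0$, and the sign of $q$ contributes only a factor of $2$, the possible values of $q$ form a finite set $Q \subset \bQ^\times$.

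I expect the only substantive point to be the ceiling identity above, which is where the assumption $n = 1$ or $m = 1$ is essential: without it, the residue $nv \bmod m$ would vary with $v$, the difference $\lceil nv/m \rceil - n\lceil v/m \rceil$ would range over $\{0,1,\dots,n-1\}$ as $-\val_p(t)$ runs through residue classes modulo $m$, and $\val_p(q)$ would be unbounded, so $Q$ would be infinite. Everything else is a direct unpacking of Lemma~\ref{bd2}.
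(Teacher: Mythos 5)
Your proof is correct and follows essentially the same route as the paper: take the unique expression $t=a/b^m$, apply Lemma~\ref{bd2} prime by prime, and observe that the hypothesis $n=1$ or $m=1$ makes the ceiling term match $n\val_p(b)$ exactly, so that $q=u/b^n$ has valuation bounded by $C_p$ everywhere and zero for $p\gg 0$, hence lies in a finite set. (Your closing aside slightly misstates what fails without the hypothesis --- the discrepancy $\lceil nv/m\rceil-n\lceil v/m\rceil$ stays bounded by $n$, but it can be nonzero at arbitrarily large primes, which is what would make $Q$ infinite --- though this does not affect the proof itself.)
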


\begin{proof}
Suppose $(u,t) \in S_1(X)$. We then have a unique expression $t=a/b^m$, where $a$ and $b>0$ are integers with $\gcd(a,b^m)$ not divisible by any $m$th power. If $p\mid b$ then $-\val_p(t)=m\val_p(b)-k$, where $0 \le k<m$. If $m=1$ then $k=0$, while if $n=1$ then $0 \le \tfrac{n}{m} k<1$; in either case, $\lceil-\tfrac{n}{m} k \rceil=0$. Thus $\val_p(u)=\epsilon+n\val_p(b)$ by Lemma~\ref{bd2}, with $|\epsilon| \le C_p$. If $p \nmid b$ then $\val_p(t) \ge 0$, and so $|\val_p(u)| \le C_p$. We thus see that $|\val_p(u/b^n)| \le C_p$ for all $p$. Since we can take $C_p=0$ for $p \gg 0$, this means that there are only finitely many possibilities for the rational number $u/b^n$.
\end{proof}

\begin{lemma}
\label{bd4}
Suppose $(u,t) \in S_1(X)$ and write $t=a/b^m$ and $u=qb^n$ per Lemma~\ref{bd3}. Then $|a| \lesssim X^{m/12n}$ and $|b| \lesssim X^{1/12n}$.
\end{lemma}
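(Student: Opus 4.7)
The plan is to bound $b$ first via the height inequality together with Lemma~\ref{bd1}, and then to bound $|a|$ by splitting into cases on the size of $|t|=|a|/b^m$.

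\emph{Bounding $b$.} From $A=u^4 f(t)$ and $B=u^6 g(t)$ one has
\[
\max(|A|^3,|B|^2)=|u|^{12}\cdot\max(|f(t)|^3,|g(t)|^2).
\]
Applying Lemma~\ref{bd1} at the infinite place, $\max(|f(t)|,|g(t)|)\ge c_\infty>0$ for every $t\in\bQ$, hence $\max(|f(t)|^3,|g(t)|^2)\gtrsim 1$. Combined with $|A|^3,|B|^2<X$, this forces $|u|^{12}\lesssim X$. Writing $u=qb^n$ with $q$ drawn from the finite set $Q$ of nonzero rationals (so $|q|$ is bounded away from both $0$ and $\infty$), we conclude $b^{12n}\lesssim X$, i.e., $b\lesssim X^{1/12n}$.

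\emph{Bounding $|a|$.} If $|t|\le1$, then $|a|\le b^m\lesssim X^{m/12n}$ follows immediately from the bound just obtained. Suppose now $|t|>1$. By hypothesis $n/m=\max(r/4,s/6)$, so at least one of the equalities $mr=4n$ or $ms=6n$ holds; assume $mr=4n$ (the other case is symmetric, using $g$ and $B$ in place of $f$ and $A$). Since $f$ has positive degree $r$ with leading coefficient $c_r\ne 0$, fix $T_0\ge1$ with $|f(t)|\ge\tfrac12|c_r||t|^r$ for all $|t|\ge T_0$. For $1<|t|\le T_0$, trivially $|a|\le T_0 b^m\lesssim X^{m/12n}$. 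For $|t|>T_0$,
\[
|A|=|u|^4|f(t)|\gtrsim b^{4n}\cdot\frac{|a|^r}{b^{mr}}=|a|^r,
\]
so $|a|^{3r}\lesssim|A|^3<X$; since $r=4n/m$ this yields $|a|\lesssim X^{m/12n}$.

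\emph{Main obstacle.} The crucial point is that Lemma~\ref{bd1} at the infinite place rules out the degenerate scenario in which $f(t)$ and $g(t)$ are simultaneously tiny, which would otherwise allow $|u|$ (and hence $b$) to be enormous while keeping $(|A|,|B|)$ small. Once that is in hand, the remaining analysis reduces to matching the polynomial growth rate of $f$ (or $g$) in $|t|$ to the exponent $4n=mr$ (resp.\ $6n=ms$) dictated by the very definition of $n/m$; this is the source of the exponent $1/12n$ appearing in both bounds.
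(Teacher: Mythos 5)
Your proof is correct and follows essentially the same route as the paper: bound $|u|$ (hence $b$) from the height condition together with Lemma~\ref{bd1} at the archimedean place, then bound $|a|$ by matching the growth of $f$ (or $g$) in $|t|$ to the exponent $n/m=\max(r/4,s/6)$, with the trivial bound $|a|\le |t|\,b^m$ handling bounded $|t|$. Your three-way split on $|t|$ (isolating the range $1<|t|\le T_0$) is if anything slightly more careful than the paper's stated inequality $|f(t)|\gtrsim|t|^r$ for all $|t|\ge1$, which taken literally would fail at a real root of $f$ of absolute value at least $1$.
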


\begin{proof}
Suppose $(u,t) \in S_1(X)$ and write $t=a/b^m$ and $u=qb^n$ as above. The inequality $\max(|A|^3,|B|^2)<X$ translates to
\begin{displaymath}
|u| \max(|f(t)|^{1/4}, |g(t)|^{1/6})<X^{1/12}.
\end{displaymath}
Let $K_1>0$ be a constant such that $\max(|f(t)|^{1/4}, |g(t)|^{1/6}) \ge K_1$ for all $t$, which exists by Lemma~\ref{bd1}. Then $|u| \le K_1^{-1} X^{1/12}$, and so
\begin{displaymath}
|b| \le K_2 X^{1/12n},
\end{displaymath}
with $K_2=K_1^{-1/n} \max_{q \in Q}(q^{-1/n})$. Suppose for the moment that $|t| \ge 1$. Let $K_3>0$ be a constant so that $K_3^4 |t|^r \le |f(t)|$ and $K_3^6 |t|^s \le |g(t)|$ holds for all such $t$. We thus have
\begin{displaymath}
X^{1/12}>|u| \max(|f(t)|^{1/4}, |g(t)|^{1/6}) \ge K_3 |u| \max(|t|^{r/4}, |t|^{s/6}) = K_3 |u| |t|^{n/m}
=K_4(q) |a|^{n/m},
\end{displaymath}
where $K_4(q)=K_3 q$. We therefore find $|a|<K_5 X^{m/12n}$ with $K_5=\max_{q \in Q}K_4(q)^{-m/n}$. Now suppose $|t|<1$. Then $|a|<|b^m|\leq K_2^m X^{m/12n}$. Thus, in all cases, we have
\begin{displaymath}
|a|<K_6 X^{m/12n}
\end{displaymath}
with $K_6=\max(K_5,K_2^m)$.
\end{proof}

The lemma shows that $\# S_1(X) \lesssim X^{(m+1)/12n}$, which implies the same for $\# S(X)$. We have thus established the upper bound in Proposition~\ref{bd}.

\subsection{The lower bound}

We now turn to the lower bound. Observe that by changing $u$ to $Mu$ for appropriate $M$, it suffices to consider the case where $f$ and $g$ have integer coefficients, which we now assume. For $(a,b) \in \bZ^2$ put $u=b^n$ and $t=a/b^m$, and $A=u^4f(t)$ and $B=u^6g(t)$. Note that $A$ and $B$ are integers. Fix a small constant $\kappa>0$ and let $S_2(X)$ denote the set of pairs $(a,b) \in \bZ^2$ satisfying the following conditions:
\begin{itemize}
\item $a$ and $b$ are coprime and $b>0$.
\item $|a|<\kappa X^{m/12n}$ and $|b|<\kappa X^{1/12n}$.
\item $4A^3+27B^2 \ne 0$.
\end{itemize}
We note that for appropriately chosen $\kappa$, if $(a,b) \in S_2(X)$ then $|A|<X^{1/3}$ and $|B|<X^{1/2}$.

\begin{lemma}
\label{bd5}
There exists a non-zero integer $D$ with the following property: if $(a,b) \in S_2(X)$ then $\gcd(A^3, B^2)$ divides $D$.
\end{lemma}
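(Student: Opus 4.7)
The plan is a prime-by-prime analysis: I will show that $\val_p(\gcd(A^3, B^2)) = \min(3\val_p(A), 2\val_p(B))$ is bounded by a constant $c_p$ depending only on $p, f, g$, with $c_p = 0$ for all sufficiently large $p$. Then $D = \prod_p p^{c_p}$, a finite product, gives the required integer. Throughout, I assume $f(t), g(t) \ne 0$; the finitely many coprime pairs $(a,b)$ for which $f(t) = 0$ or $g(t) = 0$ (one per rational root) each give a single bounded value of $\gcd(A^3, B^2)$, absorbed into $D$.

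The central device is a \emph{minimal auxiliary} $u_* \in \bQ_{>0}$ defined pointwise by $\val_p(u_*) = \max(\lceil -\val_p(f(t))/4 \rceil, \lceil -\val_p(g(t))/6 \rceil)$. By construction, $A_* := u_*^4 f(t)$ and $B_* := u_*^6 g(t)$ are integers, $(u_*, t) \in S_1(X)$, and so $\val_p(\gcd(A_*^3, B_*^2)) \in [0, 11]$ at every prime. Applying Lemma~\ref{bd2} to $(u_*, t)$: if $\val_p(b) = k \ge 1$ (so $\val_p(t) = -mk$), then $\val_p(u_*) = nk + \epsilon$, while $\val_p(u_*) = \epsilon$ if $\val_p(b) = 0$, with $|\epsilon| \le C_p$ and $C_p = 0$ for all large $p$. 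Since $\val_p(u) = n\val_p(b)$ matches the leading term in either case, $|\val_p(u) - \val_p(u_*)| \le C_p$. The relations $A = (u/u_*)^4 A_*$ and $B = (u/u_*)^6 B_*$ then give
\[
\val_p(\gcd(A^3, B^2)) = 12\bigl(\val_p(u) - \val_p(u_*)\bigr) + \val_p(\gcd(A_*^3, B_*^2)) \le 12 C_p + 11.
\]

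For large $p$, both summands on the right vanish. The first since $C_p = 0$ by Lemma~\ref{bd2}. For the second: if $\val_p(t) \ge 0$, the strong form of Lemma~\ref{bd1} forces one of $\val_p(f(t)), \val_p(g(t))$ to be zero, giving $\val_p(u_*) = 0$ and $\min(3\val_p(A_*), 2\val_p(B_*)) = 0$; if $\val_p(t) = -mk < 0$, all coefficients of $f$ and $g$ being $p$-units makes the leading terms strictly dominate, yielding exact valuations $\val_p(f(t)) = -rmk$ and $\val_p(g(t)) = -smk$, whence the algebraic identity
\[
\max(3rm, 2sm) = 12m \cdot \max\bigl(\tfrac{r}{4}, \tfrac{s}{6}\bigr) = 12n
\]
forces $\val_p(u_*) = nk$ and $\min(3\val_p(A_*), 2\val_p(B_*)) = 0$.

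The main obstacle is the clean reduction to the minimal representative $(u_*, t)$, together with verifying that at large primes the leading-term estimates hold exactly; the critical ingredient is the identity $\max(3rm, 2sm) = 12n$, ensuring that the $k$-linear contributions to $3\val_p(A)$ and $2\val_p(B)$ cancel in the minimum. This is precisely where the hypothesis $n/m = \max(r/4, s/6)$ is used (the condition $n = 1$ or $m = 1$ plays no role in this lemma). Setting $c_p = 12 C_p + 11$ for the finitely many remaining $p$ and $c_p = 0$ otherwise completes the proof.
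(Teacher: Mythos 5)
Your proof is correct, and it reaches the same essential cancellation as the paper, but it packages the bookkeeping differently. The paper's own proof works prime by prime directly on $(a,b)$: for $\val_p(b)=k>0$ it writes $\val_p(A^3)=12m(\tfrac{n}{m}-\tfrac{r}{4})k+\epsilon$ and $\val_p(B^2)=12m(\tfrac{n}{m}-\tfrac{s}{6})k+\delta$ with bounded errors (zero for $p\gg0$), so that $\tfrac{n}{m}=\max(\tfrac{r}{4},\tfrac{s}{6})$ kills one of the main terms — exactly your identity $\max(3rm,2sm)=12n$ — and for $\val_p(b)=0$ it bounds $\min(3\val_p(f(t)),2\val_p(g(t)))$ via Lemma~\ref{bd1}. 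You instead compare $(u,t)=(b^n,a/b^m)$ with the minimal twist $(u_*,t)$, bound $\val_p(u)-\val_p(u_*)\le C_p$ by Lemma~\ref{bd2}, and use the trivial bound $\val_p(\gcd(A_*^3,B_*^2))\le 11$ coming from 12th-power-freeness of the minimal model. This reuses the upper-bound machinery rather than re-deriving the leading-term estimates, at the cost of a weaker (but still uniform) constant $12C_p+11$ and of having to redo the exact large-$p$ computation anyway; the paper's direct route is shorter and self-contained for this lemma. Both arguments correctly avoid the hypothesis $n=1$ or $m=1$, as you note.

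Two small points to tighten. First, your large-$p$ condition ``all coefficients of $f$ and $g$ are $p$-units'' is not the right statement (a zero coefficient is never a unit); what you need, and what suffices for the exact valuations $\val_p(f(t))=r\val_p(t)$ and $\val_p(g(t))=s\val_p(t)$ when $\val_p(t)<0$, is that the coefficients are $p$-integral and the \emph{leading} coefficients are $p$-units, as in the paper's Lemmas~\ref{bd1} and \ref{bd2}. Second, the claim $(u_*,t)\in S_1(X)$ needs the height check $|A_*|<X^{1/3}$, $|B_*|<X^{1/2}$: this follows because integrality of $A,B$ forces $\val_p(u_*)\le\val_p(u)$ at every prime, hence $|u_*|\le|u|$, but you should say so (alternatively, you never really need $S_1(X)$-membership, since your definition of $\val_p(u_*)$ is exactly the formula established inside the proof of Lemma~\ref{bd2}). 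Your explicit treatment of the finitely many pairs with $f(t)=0$ or $g(t)=0$, using uniqueness of the representation $t=a/b^m$ with $\gcd(a,b)=1$, $b>0$, is a welcome detail that the paper leaves implicit.
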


\begin{proof}
It suffices to find constants $e_p$ such that $\val_p(\gcd(A^3,B^2)) \le e_p$ and $e_p=0$ for $p \gg 0$, for then we can take $D=\prod_p p^{e_p}$.

Suppose $(a,b) \in S_2(X)$, and let $p$ be a prime. Let $K_1$ be a constant so that $|3\val_p(f(t))-3r\val_p(t)| \le K_1$ and $|2\val_p(g(t))-2s\val_p(t)| \le K_1$ for all $t \in \bQ$ with $\val_p(t)<0$. Note that for $p \gg 0$ we can take $K_1=0$. Suppose $\val_p(b)=k>0$, so that $\val_p(t)=-mk$ and $\val_p(u)=nk$. Then
\begin{displaymath}
\val_p(A^3)=\val_p(u^{12} f(t)^3)=12nk-3rmk+\epsilon=12m(\tfrac{n}{m}-\tfrac{r}{4})k+\epsilon
\end{displaymath}
and
\begin{displaymath}
\val_p(B^2)=\val_p(u^{12} g(t)^2)=12nk-2smk+\delta = 12m(\tfrac{n}{m}-\tfrac{s}{6})k+\delta
\end{displaymath}
where $|\epsilon| \le K_1$ and $|\delta| \le K_1$. However, $\tfrac{n}{m}=\max(\tfrac{r}{4}, \tfrac{s}{6})$, and so $\val_p(\gcd(A^3, B^2)) \le K_1$.

Let $K_2$ be a constant so that $\max(3\val_p(f(t)), 2\val_p(g(t))) \le K_2$ for all $t \in \bQ$ with $\val_p(t) \ge 0$. This constant exists by Lemma~\ref{bd1}; furthermore, we can take $K_2=0$ for $p$ sufficiently large. Since $p$ does not divide $b$, this gives $\val_p(\gcd(A^3,B^2)) \le K_2$.

We thus see that we can take $e_p=\max(K_1, K_2)$, and this can be taken to be 0 for $p \gg 0$.
\end{proof}

Let $S_3(X)$ denote the set of pairs $(A,B) \in \bZ^2$ coming from $S_2(X)$. We have a map $S_3(X) \to S(X)$ taking $(A,B)$ to $(A/d^4,B/d^6)$, where $d^{12}$ is the largest 12th power dividing $\gcd(A^3,B^2)$.

\begin{lemma}\label{lem:S3toS}
There exists a constant $N$ such that every fiber of the map $S_3(X) \to S(X)$ has cardinality at most $N$.
\end{lemma}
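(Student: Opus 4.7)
My plan is to exploit Lemma~\ref{bd5}, which bounds $\gcd(A^3,B^2)$ uniformly for $(A,B) \in S_3(X)$, to reduce the problem to counting divisors of a fixed integer.

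First I would unwind what the fiber looks like. By construction, an element $(A,B) \in S_3(X)$ maps to $(A',B') = (A/d^4, B/d^6) \in S(X)$, where $d = d(A,B)$ is the unique positive integer with $d^{12}$ equal to the largest $12$th power dividing $\gcd(A^3,B^2)$. Thus, for a fixed target $(A',B') \in S(X)$, every preimage in $S_3(X)$ is of the shape $(d^4 A',\, d^6 B')$ for some positive integer $d$, and $d$ determines the preimage. So the fiber cardinality is bounded above by the number of admissible values of $d$.

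Next, I would bound the set of admissible $d$. If $(d^4 A', d^6 B') \in S_3(X)$ lies in the fiber, then $d^{12}$ divides $\gcd\bigl((d^4A')^3, (d^6B')^2\bigr) = \gcd(A^3,B^2)$. By Lemma~\ref{bd5}, there is a fixed nonzero integer $D$ (independent of $X$) such that $\gcd(A^3,B^2) \mid D$ for every $(A,B) \in S_3(X)$. Hence $d^{12} \mid D$, which forces $\val_p(d) \le \lfloor \val_p(D)/12 \rfloor$ for every prime $p$, and these bounds are nontrivial only for the finitely many primes dividing $D$. Therefore the set
\begin{displaymath}
\{ d \in \bZ_{>0} : d^{12} \mid D \}
\end{displaymath}
is finite, and taking $N$ to be its cardinality (which depends only on $D$, not on $X$) completes the argument.

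There is no real obstacle here: the whole content is packaged into Lemma~\ref{bd5}. The only thing worth being careful about is checking that $d$ genuinely determines the preimage (which follows because $A'$ and $B'$ are fixed), and that the $d$ produced in the preimage is indeed the ``correct'' $d$ for the map, i.e.\ $d^{12}$ really is the largest $12$th power dividing $\gcd\bigl((d^4A')^3,(d^6B')^2\bigr)$; this is automatic because $\gcd((A')^3,(B')^2)$ is itself $12$th-power-free, as $(A',B')$ lies in $S(X)$.
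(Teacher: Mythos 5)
Your proof is correct and follows essentially the same route as the paper: fibers over $(A',B')\in S(X)$ consist of pairs $(d^4A',d^6B')$, Lemma~\ref{bd5} forces $d^{12}\mid D$, and the number of such $d$ is a constant depending only on $D$. The extra verification that $d^{12}$ is genuinely the largest twelfth power dividing $\gcd\bigl((d^4A')^3,(d^6B')^2\bigr)$ is a harmless refinement not needed for the bound itself.
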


\begin{proof}
Let $(A_0,B_0) \in S(X)$ be given. Suppose $(A,B) \in S_3(X)$ maps to $(A_0, B_0)$, i.e., $A=d^4A_0$ and $B=d^6B_0$ for some $d \in \bZ$. By Lemma~\ref{bd5}, $d^{12} \mid D$. Therefore, if $N$ is the number of 12th powers dividing $D$, the fibers have cardinality at most $N$.
\end{proof}

\begin{lemma}
There exists a constant $M$ such that every fiber of the map $S_2(X) \to S_3(X)$ has cardinality at most $M$.
\end{lemma}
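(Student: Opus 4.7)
The plan is to bound the fiber size by $M := \max(3r, 2s) = 12n/m$, a constant depending only on $f$ and $g$, by translating the fiber-counting problem into one of counting rational roots of a polynomial of fixed degree.

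First, I would note that any $(a, b) \in S_2(X)$ in the fiber over a given $(A_0, B_0) \in S_3(X)$ determines a rational number $t := a/b^m$, and that the assignment $(a, b) \mapsto t$ is injective on the fiber. Indeed, $\gcd(a, b) = 1$ forces $\gcd(a, b^m) = 1$, so $a/b^m$ is already in lowest terms, and hence $a$ and $b^m$ (and thus $b$, being the positive $m$th root of $b^m$) can be uniquely recovered from $t$. It therefore suffices to bound the number of possible values of $t$.

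Next, by eliminating $u = b^n$ from the defining equations $A_0 = u^4 f(t)$ and $B_0 = u^6 g(t)$, I would observe that $t$ must be a root of the polynomial
\[
h(T) := A_0^3\, g(T)^2 - B_0^2\, f(T)^3 \in \bQ[T],
\]
whose degree is at most $\max(3r, 2s) = 12n/m$.

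The main obstacle will be verifying that $h$ is not identically zero, for only then does the degree bound yield a finite count of $t$-values. Since $4A_0^3 + 27B_0^2 \ne 0$, we have $(A_0, B_0) \ne (0, 0)$. If exactly one of $A_0, B_0$ vanishes, then $h$ is a nonzero scalar multiple of $f^3$ or $g^2$, hence nonzero. In the remaining case where both are nonzero, $h \equiv 0$ would force $f^3 = (B_0^2/A_0^3)\, g^2$ in $\bQ[T]$; but $\gcd(f, g) = 1$ implies $\gcd(f^3, g^2) = 1$, so the resulting divisibility $g^2 \mid f^3$ forces $g$ to be a nonzero constant. Then $f^3$ is constant, so $f$ too is constant, contradicting the hypothesis $r + s > 0$. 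Thus $h \not\equiv 0$ has at most $\max(3r, 2s)$ rational roots, so $M := \max(3r, 2s)$ suffices.
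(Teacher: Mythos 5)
Your proof is correct, but it takes a genuinely different route from the paper's. The paper views an element of the fiber as a common solution $(x,y)=(b^{-n},\,a/b^m)$ of the two plane curves $A_0x^4=f(y)$ and $B_0x^6=g(y)$, observes that these curves share no common component, and invokes B\'ezout's theorem to get the bound $M=\max(4,r)\max(6,s)$; the pair $(a,b)$ is recovered from $(x,y)$ directly. You instead eliminate $u$ to reduce to a single univariate polynomial $h(T)=A_0^3\,g(T)^2-B_0^2\,f(T)^3$ and bound the fiber by its number of rational roots, which forces you to supply two extra ingredients, both of which you handle correctly: that $(a,b)$ is recoverable from $t=a/b^m$ alone (using $\gcd(a,b)=1$ and $b>0$ from the definition of $S_2(X)$), and that $h\not\equiv 0$, your case analysis using $4A_0^3+27B_0^2\ne 0$ together with coprimality of $f,g$ and $\max(r,s)>0$ playing the role of the paper's ``no common components'' check. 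Your argument is more elementary (no B\'ezout) and gives the sharper constant $\max(3r,2s)=12n/m$ in place of $\max(4,r)\max(6,s)$, at the modest cost of the injectivity verification. One trivial slip: from $A_0^3g^2=B_0^2f^3$ the constant is $A_0^3/B_0^2$, not $B_0^2/A_0^3$; this has no effect on the argument.
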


\begin{proof}
Let $(A,B) \in S_3(X)$ be given. An element $(a,b) \in S_2(X)$ of the fiber gives a solution to the equations $Ax^4=f(y)$ and $Bx^6=g(y)$ by $x=u^{-1}=b^{-n}$ and $y=t=a/b^m$. Furthermore, $(a,b)$ is determined from $(x,y)$. These two equations define plane curves of degree $\max(4,r)$ and $\max(6,s)$, which are easily seen to have no common components. By B\'ezout's theorem, they therefore have at most $M=\max(4,r) \max(6,s)$ points in common, and this bounds the cardinality of the fibers.
\end{proof}

We leave to the reader the standard estimate $X^{(m+1)/12n} \lesssim \# S_2(X)$. This gives the lower bound in Proposition~\ref{bd} since we have a map $S_2(X) \to S(X)$ whose fibers have bounded size. We have thus completed the proof of Proposition~\ref{bd}.

\section{Equations for universal curves}
\label{explan}

Let $G$ be one of the 12 groups in \eqref{mazur} for which $2G \ne 0$. Let $E$ be a family of elliptic curves over a base scheme $S/\bQ$. By a \emph{$G$-structure} on $E$ we mean an injection of groups $i \colon G_S \to E$. When $G=\bZ/N\bZ$, a $G$-structure is just a section of $E$ of order $N$. There is a moduli space $Y/\bQ$ of elliptic curves equipped with a $G$-structure, over which there is a universal elliptic curve $\cE$ with a $G$-structure. (Although for $G=\bZ/3\bZ$ we must slightly change the definition of $Y$, see below.) In fact, $Y$ can be identified with an open subvariety of $\bA^1$; choose such an identification. Then $\cE$ is given by an equation of the form
\begin{equation}
\label{univeq}
y^2=x^3+f(t)x+g(t),
\end{equation}
where $f$ and $g$ are rational functions in $t$ with rational coefficients. Making a change of variables, we can assume that $f$ and $g$ are polynomials and $\gcd(f^3,g^2)$ is not divisible by any 12th powers. Note that neither $f$ nor $g$ is zero, as then every geometric fiber of $\cE$ would have CM. We show that $f$ and $g$ are coprime and compute their degree, which allows us to apply Theorem~\ref{general} to prove Theorem~\ref{thm:main2}.

\subsection{The case $\bZ/4\bZ$}

Let $G=\bZ/4\bZ$. We have the following result.

\begin{proposition}
For an appropriate choice of embedding $Y \to \bA^1$, the polynomials $f$ and $g$ are coprime and satisfy $\deg(f)=2$ and $\deg(g)=3$.
\end{proposition}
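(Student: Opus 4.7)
The plan is to exhibit the modular curve $Y = Y_1(4)$ via the Tate normal form and then pass to short Weierstrass coordinates. First, I would invoke the Tate normal form: any pair $(E,P)$ consisting of an elliptic curve $E/\bQ$ and a point $P\in E(\bQ)$ of exact order $4$ is uniquely isomorphic to
\[
E_t\ :\ y^2 + xy - t y = x^3 - t x^2,\qquad P=(0,0),
\]
for a unique $t \in \bQ$. One shows this by first translating so $P=(0,0)$, applying a shear $y\mapsto y+\alpha x$ to kill the $a_4$-term, observing that the condition $2P\in E[2]\setminus\{O\}$ forces $a_3 = a_1 a_2$, and finally using the standard $(x,y)\mapsto(u^2x,u^3y)$ scaling to set $a_1=1$; the remaining free parameter is $t:=-a_2$. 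This identifies $Y$ with an open subscheme of $\bA^1_t$ and provides the required embedding.

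Next, I would convert $E_t$ to short Weierstrass form via the usual invariants \cite[Ch.~III \S1]{silverman}. A direct computation yields
\[
c_4(t) = 16t^2 + 16t + 1, \quad c_6(t) = 64t^3 - 120t^2 - 24t - 1, \quad \Delta(t) = t^4(16t+1),
\]
so $E_t \cong y^2 = x^3 - 27\,c_4(t)\,x - 54\,c_6(t)$ over $\bQ$. A final rational rescaling $(A,B)\mapsto(\lambda^4 A,\lambda^6 B)$, if needed, enforces the absence of $12$th-power divisors in $\gcd(f^3,g^2)$, but leaves $f$ and $g$ as nonzero scalar multiples of $c_4$ and $c_6$; hence $\deg f=2$ and $\deg g=3$.

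For coprimality I invoke the identity $c_4^3 - c_6^2 = 1728\,\Delta$. Any common root $t_0\in\ol{\bQ}$ of $f$ and $g$ would be a common root of $c_4$ and $c_6$, hence a root of $\Delta$, so $t_0\in\{0,-1/16\}$. But $c_4(0)=1$ and $c_4(-1/16)=1/16$ are both nonzero, a contradiction; thus $f$ and $g$ are coprime. The only real obstacle in this plan is administrative: verifying the Tate normal form (a classical but somewhat fiddly manipulation) and tracking the scalar normalization when passing to a minimal short-Weierstrass model. The substantive input is the coprimality check, which is immediate once $\Delta$ is in hand.
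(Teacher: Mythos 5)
Your proposal is correct and follows essentially the same route as the paper: both start from the Tate normal form $y^2+xy-ty=x^3-tx^2$ for the universal curve with a point of order $4$ (the paper simply cites Kubert's Table~3, which you re-derive) and then pass to a short Weierstrass model, yours via $c_4,c_6$ rather than the paper's explicit substitution. Your computed $c_4$, $c_6$, $\Delta$ are correct, and your explicit coprimality check via $c_4^3-c_6^2=1728\Delta$ supplies a detail the paper leaves implicit.
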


\begin{proof}
By \cite[Tab.~3]{kubert}, the curve $\cE$ is given by
\begin{displaymath}
y^2+xy-ty=x^3-tx^2,
\end{displaymath}
for some embedding $Y \to \bA^1$. The result is obtained from the change of variables:
\[
	x\mapsto x+\frac{t}{3}-\frac{1}{12}\quad\text{and}\quad y\mapsto y-\frac{x}{2}+\frac{t}{3}+\frac{1}{24}.
\]
\end{proof}

This establishes the second row of Table~\ref{f:params}, from which Theorem~\ref{thm:main2} follows for $G$.

\subsection{The cases with $\# G>4$}

We assume now that $G$ is one of the 10 groups in \eqref{mazur} of order greater than 4.

\begin{proposition}
\label{prop1}
The polynomials $f$ and $g$ are coprime, and $\deg(f)=4\ell$ and $\deg(g)=6\ell$ for some integer $\ell$. In fact, if $k$ denotes the number of injective group homomorphisms $G \to (\bQ/\bZ)^2$ then $\ell=k/24$.
\end{proposition}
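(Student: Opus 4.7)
My approach is to identify $f$ and $g$ (up to nonzero rational constants) with the Weierstrass invariants $c_4 = -48 f$ and $c_6 = -864 g$ of the universal elliptic curve, and then use the geometry of the compactified modular curve $\ol{Y} \cong \bP^1$ (obtained from $Y \injects \bA^1$) to compute the degrees. Since $\# G > 4$, the automorphism $-1$ of a generic elliptic curve fails to preserve any $G$-structure, so $Y$ is a fine moduli space and $\cE$ extends to a universal generalized elliptic curve $\ol{\cE}$ over $\ol{Y}$, with N\'eron $1$-gon fibers (multiplicative reduction) at every cusp $t_0 \in \ol{Y} \setminus Y$.

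Let $\omega$ denote the Hodge bundle on $\ol{Y}$, so that $c_4$, $c_6$, and $\Delta = -16(4f^3 + 27g^2)$ are canonical sections of $\omega^4$, $\omega^6$, and $\omega^{12}$ respectively. The divisor of $\Delta$ is supported on the cusps with multiplicity equal to the cusp width (from Kodaira type $I_n$), giving
\[
12 \deg \omega = \sum_{\text{cusps } t_0} (\text{width at } t_0) = \deg\bigl(j \colon \ol{Y} \to \bP^1_j\bigr)
\]
as a map of coarse spaces. This degree equals $k/2$: a generic fiber consists of the $k$ $G$-structures on a fixed $E$ (each an injection $G \hookrightarrow E[\mathrm{tors}] \cong (\bQ/\bZ)^2$), but since $E$ has automorphism group $\{\pm 1\}$ while a generic pair $(E, i)$ has trivial automorphism group (precisely because $-1$ does not preserve $i$ when $\#G > 4$), the coarse-space degree is halved. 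Thus $\deg \omega = k/24 = \ell$. The Weierstrass equation trivializes $\omega|_{\bA^1}$ via the invariant differential $dx/(2y)$, expressing $c_4$ and $c_6$ as the polynomials $-48f(t)$ and $-864g(t)$. Passing to the local parameter $u = 1/t$ at $\infty$ and substituting $(x, y) = (u^{-2\ell} X, u^{-3\ell} Y)$ writes $c_4$ as $u^{4\ell - r}\tilde\phi(u)$ and $c_6$ as $u^{6\ell - s}\tilde\psi(u)$ (where $r = \deg f$, $s = \deg g$, and $\tilde\phi(0), \tilde\psi(0) \ne 0$); the nonvanishing of $c_4$ and $c_6$ at the cusp of $\ol{\cX}(1)$---visible in their $q$-expansions $1 + O(q)$ and $-1 + O(q)$---transports to nonvanishing at $\infty$, forcing $4\ell - r = 0$ and $6\ell - s = 0$.

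For coprimality, suppose $f(t_0) = g(t_0) = 0$. Then $\Delta(t_0) = 0$, so $t_0$ cannot lie in $Y$ and must be a cusp; but at a multiplicative-reduction cusp the minimal Weierstrass model satisfies $v_{t_0}(c_4) = 0$, and our equation is locally minimal at $t_0$ by the standing hypothesis that $\gcd(f^3, g^2)$ has no twelfth-power factor, yielding $f(t_0) \ne 0$, a contradiction. The most delicate step I anticipate is pinning down $\deg(j) = k/2$ on coarse spaces, which requires careful accounting of the stacky automorphism groups of $\ol{\cX}(1)$ and $\ol{Y}$; combined with verifying that every cusp of each of the ten modular curves under consideration supports multiplicative reduction---a standard fact for $\Gamma_1(N)$ and its analogues at sufficient level---this would complete the argument.
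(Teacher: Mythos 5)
Your proposal is correct in substance and shares the paper's skeleton, but it replaces the key degree computation by a different mechanism. Both arguments get coprimality and the degrees at $\infty$ from the same two ingredients: $\cE$ has multiplicative reduction at every point of $\ol{Y}\setminus Y$ (the paper proves this as a separate lemma, via N\'eron models and via Deligne--Rapoport, while you invoke it as standard), combined with local minimality of the equation, which makes $c_4$ and $c_6$ units at the cusps; and both compute the coarse degree of $j$ as $k/2$ by the identical generic-automorphism count. Where you genuinely diverge is the identity $12\ell=\deg j$: the paper just notes that $j=f^3/(4f^3+27g^2)$ is a rational self-map of $\bP^1$ of degree $12\ell$ once $f,g$ are coprime, whereas you prove $12\deg\omega=\sum(\text{cusp widths})=\deg j$ from the divisor of $\Delta$ in $\omega^{12}$ and then read off $\deg f=4\deg\omega$, $\deg g=6\deg\omega$ at the cusp $\infty$. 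Your route makes the degree count independent of coprimality and is essentially the valence formula, at the cost of more machinery. Three points to tighten: rigidity of the moduli problem does not follow merely from $-1$ failing to preserve a $G$-structure on a \emph{generic} curve (you must also exclude the extra automorphisms at $j=0,1728$; the paper simply takes the fine moduli space $Y$ and universal $\cE$ as given); the cuspidal fibers are N\'eron $d$-gons with varying $d$, not always $1$-gons, though all you actually use is multiplicative reduction and $v(\Delta)=\text{width}$; and the substitution $(x,y)=(u^{-2\ell}X,u^{-3\ell}Y)$ is the correct local model only because $dx/2y$ generates $\omega$ over all of $\bA^1$ (good reduction on $Y$, minimality plus multiplicative reduction at the finite cusps), so that its divisor is exactly $\ell\cdot[\infty]$ --- this step is implicit in your write-up and should be made explicit.
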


We first require a lemma.

\begin{lemma}
Let $A$ be a DVR with fraction field $K$ and residue characteristic 0. Let $E/K$ be an elliptic curve admitting a $G$-structure. Then $E$ has semi-stable reduction.
\end{lemma}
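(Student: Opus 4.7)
The plan is to prove the contrapositive: if $E$ has additive reduction, then $\#E(K)_{\tors}\le 4$, which contradicts the existence of an injection $G\hookrightarrow E(K)$, since all ten groups currently under consideration have order at least $5$. So I would assume for contradiction that $E/K$ has additive reduction and aim for a uniform bound on $E(K)_{\tors}$.

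The main step is to pass to the N\'eron model $\mathcal{E}/A$ and invoke the classical reduction injection: for every integer $N$ prime to the residue characteristic, reduction induces an injection $E(K)[N]\hookrightarrow \mathcal{E}_s(k)[N]$, where $\mathcal{E}_s$ is the special fiber (this is formal smoothness of the N\'eron model plus Hensel's lemma). Since the residue characteristic is zero by hypothesis, the ``prime-to-$p$'' restriction is vacuous, so the entire torsion subgroup $E(K)_{\tors}$ injects into $\mathcal{E}_s(k)$. Under the assumption of additive reduction, the identity component $\mathcal{E}_s^0$ is isomorphic to $\mathbb{G}_a$ over $k$; in characteristic zero $\mathbb{G}_a$ has no nontrivial $k$-forms and $\mathbb{G}_a(k)=(k,+)$ is torsion-free. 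Post-composing the reduction map with the quotient $\mathcal{E}_s(k)\twoheadrightarrow \Phi(k)$ to the component group therefore upgrades to an injection $E(K)_{\tors}\hookrightarrow \Phi(k)\hookrightarrow\Phi(\bar k)$. The Kodaira--N\'eron classification then supplies the uniform bound $\#\Phi(\bar k)\le 4$ across every additive Kodaira type, the worst cases being $I_n^*$ (where $\Phi\cong(\mathbb{Z}/2)^2$ or $\mathbb{Z}/4$) and $IV$, $IV^*$ (where $\Phi\cong\mathbb{Z}/3$). This contradicts $\#G>4$, as desired.

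There is really no substantive obstacle: both inputs—the reduction injection and the Kodaira bound on additive component groups—are entirely standard, and the characteristic-zero hypothesis on $k$ eliminates any concern about wild ramification, inseparable reduction, or restrictions on which torsion reduces injectively. The only bookkeeping is a quick check that each of the ten relevant groups in \eqref{mazur} does indeed have order exceeding $4$, which is immediate from the list.
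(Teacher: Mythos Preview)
Your argument is correct and is essentially the paper's first proof: assume additive reduction, pass to the N\'eron model, inject the torsion of $E(K)$ into the special fiber (using that the formal group has no torsion in residue characteristic~$0$), and then use that $\bG_a$ is torsion-free together with the bound $\#\Phi \le 4$ from the Kodaira--N\'eron table to contradict $\#G>4$. The paper phrases the last step as ``the map $G\to\pi_0(\ol{E}_0)$ necessarily has a kernel,'' but this is exactly your component-group bound; the paper also records a second proof via Deligne--Rapoport properness, which you do not need.
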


We give two proofs.

\begin{proof}[First proof]
Suppose $E$ has additive reduction. Let $\ol{E}/A$ be the N\'eron model of $E$. The $G$-structure on $E$ extends to one on $\ol{E}$ by the N\'eron mapping property. Thus the special fiber $\ol{E}_0$ of $\ol{E}$ contains a subgroup isomorphic to $G$. The map $G \to \pi_0(\ol{E}_0)$ necessarily has a kernel, by the classification of special fibers of N\'eron models. Thus $G$ meets the identity component $\ol{E}_0^{\circ}$ non-trivially. But this is a contradiction, since $\ol{E}_0^{\circ} \cong \bG_a$ is torsion-free.
\end{proof}

\begin{proof}[Second proof]
Let $X$ be the moduli space of generalized elliptic curves with $G$-structure, in the sense of Deligne--Rapoport. Then $X$ is a proper scheme. It follows that $E/K$ extends to a generalized elliptic curve $\ol{E}/A$ with $G$-structure. The identity component of the smooth locus of the special fiber of $\ol{E}$ is either an elliptic curve or a torus, and so $E$ has semi-stable reduction.
\end{proof}

\begin{proof}[Proof of Proposition~\ref{prop1}]
By the lemma, the universal curve $\cE$ has semi-stable reduction at all places of $\bP^1_{\bQ}$. Since the equation \eqref{univeq} for $\cE$ is minimal at all finite places (in the sense of \cite[Ch.~VII \S 1]{silverman}), it follows that $f$ and $g$ are coprime (see \cite[Ch.~VII, Prop.~5.1]{silverman} and \cite[Exc.~7.1]{silverman}). Now, \eqref{univeq} is not necessarily minimal at infinity. The minimal equation is given by
\begin{displaymath}
y^2=x^3+t^{-4\ell} f(t)+t^{-6\ell} g(t)
\end{displaymath}
where $\ell$ is minimal so that $t^{-4\ell} f(t)$ and $t^{-6\ell} g(t)$ have non-negative valuation at $\infty$. (Note: the valuation at $\infty$ is $-\deg$.) Now, $Y \subset \bP^1$ is exactly the locus where $\cE$ has good reduction, by universality. (Alternatively, the Deligne--Rapoport theory shows that $\cE$ extends to a genuinely generalized elliptic curve over $\infty$). Thus $\cE$ has multiplicative reduction at $\infty$, and so both $t^{-4\ell} f(t)$ and $t^{-6\ell} g(t)$ have valuation zero at $\infty$ \cite[Exc.~7.1]{silverman}, which exactly says that $\deg(f)=4\ell$ and $\deg(g)=6\ell$.

Now, the $j$-invariant of the family $\cE$ is (up to a scalar)
\begin{displaymath}
\frac{f^3}{4f^3+27g^2}.
\end{displaymath}
We regard this as a self-map of $\bP^1$. Since $f$ and $g$ are coprime, it has degree $12\ell$. On the other hand, the degree of the map $j \colon Y \to \bP^1$ is the number of points in a typical fiber over a complex point. If $E/\bC$ is an elliptic curve, the fiber of $j$ over $E$ is the set of isomorphism classes of pairs $(E, i)$ where $i \colon G \to E$ is a $G$-structure. Generically, $\Aut(E)=\{\pm 1\}$, and so the pairs $(E, i)$ and $(E, -i)$ are isomorphic and there are no other identifications. Thus the fiber of $j$ over $E$ has $k/2$ points, and so $12\ell=k/2$.
\end{proof}

Now, in the notation of Table~\ref{f:params}, we have $r=4\ell$, $s=6\ell$, $n=\ell$, and $m=1$. As $\ell=k/24$, it suffices to compute $k$. When $G=\bZ/N\bZ$, we have $k=N^2 \prod_{p|N} (1-p^{-2})$; and when $G=\bZ/2\bZ \times \bZ/N\bZ$ we have $k=2N^2 \prod_{p \mid N} (1-p^{-2})$. This establishes the row in Table~\ref{f:params} corresponding to $G$. Theorem~\ref{thm:main2} for $G$ then follows from Theorem~\ref{general}.

\subsection{The case $\bZ/3\bZ$}
\label{ss:z3}

We now treat the case $G=\bZ/3\bZ$. There is a minor complication owing to the fact that $\cY_1(3)$ is not a scheme. Since we will treat this group more thoroughly in \S \ref{sec:asymptotics}, we do not include all the details here.

Call a pair $(E, P)$ consisting of an elliptic curve $E$ over a field $k$ of characteristic 0 and a 3-torsion point $P$ \emph{exceptional} if there is a non-trivial automorphism of $E$ over $\ol{k}$ fixing $P$. Let $\rho=e^{2\pi i/3}$, let $E=\bC/\bZ[\rho]$, and let $P$ be the 3-torsion point on $E$ given by $(1-\rho)/3$. Then $(E, P)$ is exceptional. Furthermore, it is the unique exceptional pair over $\bC$, up to isomorphism.

There is a moduli space $Y$ of unexceptional pairs, which can be identified with an open subvariety of $\bA^1$. (In fact, $Y$ is the complement of the single exceptional point in the stack $\cY_1(3)$.) Let $\cE$ be the universal elliptic curve over $Y$.

\begin{lemma}
\label{lem:z3}
Let $E/\bQ$ be an elliptic curve and let $P$ be a rational point of order 3. Then $E$ admits an equation of the form
\begin{displaymath}
y^2+axy+by = x^3
\end{displaymath}
with $a,b \in \bQ$ such that $P=(0,0)$. The pair $(E,P)$ is exceptional if and only if $a=0$.
\end{lemma}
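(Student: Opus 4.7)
My plan is to handle the two assertions separately, both by direct computation in Weierstrass coordinates.

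\medskip

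\textbf{Normal form.} First I would start with an arbitrary Weierstrass equation for $E$ and translate so that $P = (0,0)$; this immediately forces $a_6 = 0$. With respect to this equation, $-P = (0, -a_3)$, so the assumption $\ord(P) = 3$ (in particular $P \ne -P$) gives $a_3 \ne 0$. Since $3P = O$, the tangent to $E$ at $P$ intersects $E$ with multiplicity three at $P$, i.e. $P$ is an inflection point. Computing the tangent line at $(0,0)$ gives slope $a_4/a_3$, and substituting $y = (a_4/a_3)x$ into the equation and extracting the $x^2$ coefficient shows that $P$ is an inflection if and only if
\[
a_2 - \tfrac{a_4^2}{a_3^2} - \tfrac{a_1 a_4}{a_3} \;=\; 0.
\]
Now I would perform the admissible change of variables $y \mapsto y + (a_4/a_3)\,x$. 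A direct computation shows this sends $(a_1,a_2,a_3,a_4)$ to $(a_1 + 2a_4/a_3,\ a_2 - a_4^2/a_3^2 - a_1 a_4/a_3,\ a_3,\ 0)$; the inflection identity above says the new $a_2$ is also zero. Setting $a := a_1 + 2a_4/a_3$ and $b := a_3$ yields the desired equation $y^2 + axy + by = x^3$ with $P = (0,0)$.

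\medskip

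\textbf{Exceptional characterization.} For the second claim, any automorphism of $E$ over $\overline{\bQ}$ fixing the point $(0,0)$ and preserving the chosen Weierstrass form is given by $(x,y) \mapsto (u^2 x,\ u^3 y + s u^2 x)$ for some $u \in \overline{\bQ}^\times$ and $s \in \overline{\bQ}$. Substituting into $y^2 + axy + by = x^3$, dividing by $u^6$, and matching coefficients with the original equation gives the four relations
\[
\tfrac{a + 2s}{u} = a,\qquad \tfrac{s(s+a)}{u^2} = 0,\qquad \tfrac{b}{u^3} = b,\qquad \tfrac{bs}{u^4} = 0.
\]
Since $b = a_3 \ne 0$, the last two relations force $u^3 = 1$ and $s = 0$; then the first becomes $a(u-1) = 0$. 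A nontrivial automorphism corresponds to $u$ a primitive cube root of unity, and this exists if and only if $a = 0$. Conversely, when $a = 0$ the map $(x,y) \mapsto (\rho^2 x, \rho^3 y) = (\rho^2 x, y)$ (with $\rho$ a primitive cube root of unity) is a nontrivial automorphism fixing $P$, so $(E,P)$ is exceptional.

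\medskip

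The only slightly subtle point is the algebraic coincidence that the substitution $y \mapsto y + (a_4/a_3)x$ simultaneously kills $a_4$ and (via the inflection identity) $a_2$; once one has written out the inflection condition, however, everything is routine. The automorphism analysis is just linear bookkeeping in the four relations above.
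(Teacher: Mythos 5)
Your proof is correct. The paper itself gives essentially no argument here: the normal form is quoted from Kubert's Table 3 and the exceptionality criterion is ``left to the reader,'' so your computation is exactly the standard filling-in of those omitted details (the Tate normal form derivation via translating $P$ to $(0,0)$, using $a_3\neq 0$ from $P\neq -P$ and the inflection condition from $3P=O$ to kill $a_4$ and $a_2$ simultaneously, followed by the usual bookkeeping with changes of variables $(x,y)\mapsto(u^2x,\,u^3y+su^2x)$ for automorphisms fixing $(0,0)$). The only point worth making explicit is that every automorphism of $E$ (as an elliptic curve, hence fixing $O$) is automatically of that coordinate-change form, by the standard fact that any isomorphism between Weierstrass equations is given by an admissible change of variables; with that remark, your four coefficient relations, the deductions $s=0$, $u^3=1$, $a(u-1)=0$, and the converse automorphism $(x,y)\mapsto(\rho^2x,y)$ when $a=0$ settle the equivalence exactly as intended.
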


\begin{proof}
The first statement is well-known; see \cite[Tab.~3]{kubert}, for instance. The second is left to the reader
\end{proof}

\begin{proposition}
For an appropriate embedding $Y \to \bA^1$, the universal family $\cE$ is given by $y^2=x^3+f(t)x+g(t)$ with $f(t)=2t-\tfrac{1}{3}$ and $g(t)=t^2+\tfrac{2}{3}t+\tfrac{2}{27}$.
\end{proposition}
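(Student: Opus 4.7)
The plan is to carry out an explicit change of coordinates from the Kubert normal form $y^2 + axy + by = x^3$ provided by Lemma~\ref{lem:z3} into short Weierstrass form, and to then choose an affine coordinate on $Y$ so as to produce the exact $f$ and $g$ in the statement.

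First I would use the scaling $(x,y) \mapsto (u^2 x, u^3 y)$, which sends $(a,b)$ to $(a/u, b/u^3)$, to normalize $a = 1$; the remaining parameter $b$ then serves as an affine coordinate on an open subset of $\bA^1$ that maps to $Y$. Second I would complete the square by substituting $y \mapsto y - (x+b)/2$, obtaining $y^2 = x^3 + (x+b)^2/4$. Third I would shift $x \mapsto x - 1/12$ to clear the $x^2$ term, producing a short Weierstrass form with $A(b) = (24b - 1)/48$ linear in $b$ and $B(b) = (216 b^2 - 36b + 1)/864$ quadratic in $b$. Fourth, I would combine a Weierstrass twist by a scalar $u$ (which sends $(A,B)$ to $(u^4 A, u^6 B)$) with an affine reparameterization $b = \alpha t + \beta$; matching the leading coefficients of $A$ and $B$ against $f$ and $g$ determines $u$ and $\alpha$ up to a sign, and then $\beta$ is pinned down by the remaining data. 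Finally I would verify by direct substitution that the image of the 3-torsion point $(0,0)$ under the composite of all these substitutions is a rational point in the new Weierstrass coordinates, producing a rational 3-torsion section of the resulting family and confirming universality on $Y$.

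The argument reduces to a bookkeeping exercise in polynomial algebra, and there is no genuine conceptual obstacle. The only mildly delicate issue is correctly tracking signs and choosing $u$, $\alpha$, and $\beta$ so that the resulting pair matches $(f(t), g(t))$ exactly; once that is done, the moduli interpretation packaged by Lemma~\ref{lem:z3} guarantees the universal property automatically, and the decomposition of the discriminant $4f^3 + 27g^2$ into a product over the finite set of points of $\bA^1 \smallsetminus Y$ confirms that the embedding $Y \hookrightarrow \bA^1$ produced by the substitutions has the expected image.
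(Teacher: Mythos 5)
Your proposal is correct and follows essentially the same route as the paper: both start from the equation of Lemma~\ref{lem:z3} applied to the universal family (with $a$ invertible on $Y$ since the exceptional point is removed), complete the square, rescale so that only the single parameter $b/a^3$ remains, and shift $x$ to reach short Weierstrass form, differing only in the order of these normalizations and in your extra coefficient-matching step at the end. One caveat: carrying out your step 4 honestly forces $u^2=4$, $\alpha=\tfrac14$, $\beta=0$ and yields $g(t)=t^2-\tfrac{2}{3}t+\tfrac{2}{27}$, which is also what the paper's own substitutions produce, so the $+\tfrac{2}{3}t$ in the stated proposition is a harmless sign typo (only the degrees and coprimality of $f,g$ are used later) rather than a gap in your argument.
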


\begin{proof}
The previous lemma can, in fact, be applied to $\cE$, and shows that it admits an equation of the form $y^2+2axy+2by=x^3$ where $a$ and $b$ are functions on $Y$, with $a$ invertible. Changing $y$ to $y-(ax+b)$ yields the equation $y^2=x^3+(ax+b)^2$. Changing $(x,y)$ to $(a^2x,a^3y)$ now gives $y^2=x^3+(x+t)^2$ with $t=b/a^3$. Finally, changing $x$ to $x-\tfrac{1}{3}$ yields the stated equation.
\end{proof}

Let $N^1(X)$ (resp.\ $N^2(X)$) be the number of (isomorphism classes of) elliptic curves $E/\bQ$ of height at most $X$ admitting a 3-torsion point $P$ such that $(E,P)$ is not (resp.\ is) exceptional. Combining the above proposition with Theorem~\ref{general}, we see that $X^{1/3} \lesssim N^1(X) \lesssim X^{1/3}$. By Lemma~\ref{lem:z3}, an exceptional curve admits an equation of the form $y^2=x^3+b^2$ with $b \in \bZ$. Clearly then, $N^2(X) \lesssim X^{1/4}$. Finally, we have the obvious bounds $N^1(X) \le N'_G(X) \le N^1(X)+N^2(X)$. This proves $X^{1/3} \lesssim N'_G(X) \lesssim X^{1/3}$, which establishes Theorem~\ref{thm:main2} for $G$.

\section{The group $\bZ/2\bZ \times \bZ/2\bZ$}\label{sec:Zmod2timesZmod2}

We begin with a variant of Proposition~\ref{bd}.

\begin{proposition}
\label{bd-var}
Let $f,g \in \bQ[t]$ be coprime polynomials of degrees $r$ and $s$. Assume one of $r$ or $s$ is positive. Write
\begin{displaymath}
\max \left( \frac{r}{2}, \frac{s}{3} \right)=\frac{n}{m}
\end{displaymath}
with $n$ and $m$ coprime. Assume $n=1$ or $m=1$. Let $S(X)$ be the set of pairs $(A,B) \in \bZ^2$ satisfying the following conditions:
\begin{itemize}
\item $4A^3+27B^2 \ne 0$.
\item $|A|<X^{1/3}$ and $|B|<X^{1/2}$.
\item $\gcd(A^3,B^2)$ is not divisible by any 12th power.
\item There exist $u,t \in \bQ$ such that $A=u^2 f(t)$ and $B=u^3 g(t)$.
\end{itemize}
Define
\begin{displaymath}
h(X) = \begin{cases} X^{(m+1)/6n} & \textrm{if $m+1>n$} \\
X^{1/6} \log(X) & \textrm{if $m+1=n$} \\
X^{1/6} & \textrm{if $m+1<n$} \end{cases}
\end{displaymath}
Then $h(X) \lesssim \# S(X) \lesssim h(X)$.
\end{proposition}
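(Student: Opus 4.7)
The plan is to mirror the proof of Proposition~\ref{bd}, with two essential adaptations for the weaker exponents $u^2, u^3$. First, the height condition becomes $|u| \cdot \max(|f(t)|^{1/2}, |g(t)|^{1/3}) \lesssim X^{1/6}$ (rather than $X^{1/12}$). Second and more substantively, the ``no $12$th power'' condition on $\gcd(A^3,B^2)$ no longer forces $\val_p(u)$ to be uniquely determined: shifting $\val_p(u)$ by $1$ now shifts $\val_p(A^3)$ and $\val_p(B^2)$ each by $6$ (rather than $12$), so the admissible set for $\val_p(u)$ is the two-integer interval $\{\lceil \alpha_p \rceil,\, \lceil \alpha_p \rceil + 1\}$, with $\alpha_p := \max(-\tfrac{1}{2}\val_p(f(t)),\, -\tfrac{1}{3}\val_p(g(t)))$. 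Writing $t = a/b^m$ and $u = qb^n$ as in the original argument, this means $q$ ranges over (essentially) the squarefree integers, a positive-density subset of $\bZ$, rather than a fixed finite set.

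Carrying Lemma~\ref{bd1} over unchanged and proving the valuation statement above in place of Lemma~\ref{bd2}, I would establish the size bound analogous to Lemma~\ref{bd4}: $|q| \cdot |a|^{n/m} \lesssim X^{1/6}$ when $|t| \geq 1$ (so $|a| \lesssim (X^{1/6}/|q|)^{m/n}$, with $b$ taking up to $(X^{1/6}/|q|)^{1/n}$ values), together with a symmetric analysis covering $|t| < 1$. Summing over $(a,b)$ for each squarefree $q$ contributes $(X^{1/6}/|q|)^{(m+1)/n}$, giving
\[
\#S(X) \;\lesssim\; \sum_{\substack{q \in \bZ \setminus \{0\} \\ q\ \text{squarefree}}} \left(\frac{X^{1/6}}{|q|}\right)^{(m+1)/n},
\]
truncated at $|q| \lesssim X^{1/6}$. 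The three branches of $h(X)$ correspond exactly to the behavior of this partial sum: it converges when $m+1 > n$ (giving $X^{(m+1)/6n}$), diverges logarithmically when $m+1 = n$ (giving $X^{1/6}\log X$), and diverges as $(X^{1/6})^{1 - (m+1)/n}$ when $m+1 < n$ (combining with $X^{(m+1)/6n}$ to give exactly $X^{1/6}$). For the lower bound, I would construct an analog of the set $S_2(X)$ from the proof of Proposition~\ref{bd}, indexed by triples $(a, b, q)$ with $(a,b)$ coprime and $q$ a squarefree integer with $|q| \lesssim X^{1/6}/|a|^{n/m}$; a B\'ezout argument as in Lemma~\ref{lem:S3toS} bounds the fibers of the resulting map to $S(X)$, and a direct count of valid triples produces the matching lower bound.

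The main obstacle lies in the borderline regime $m+1 = n$ (forced to $(m,n) = (1,2)$), where both the upper and lower bounds must produce the precise $\log X$ factor; this will require tracking the density of squarefree $q$ and counting coprime $(a,b)$ pairs uniformly in $q$, rather than the order-of-magnitude estimates that suffice in the other two cases. A secondary complication is that at the finitely many ``bad'' primes (those where $f$ or $g$ misbehaves $p$-adically) $q$ may fail to be strictly squarefree; as in the original proof, these contribute only bounded multiplicative constants and do not affect the leading asymptotic.
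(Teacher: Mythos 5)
Your proposal is essentially the paper's own argument for the upper bound: the paper likewise observes that the squarefree ambiguity in $\val_p(u)$ (an extra $p$ in $u$ only contributes $p^6$, not $p^{12}$, to $\gcd(A^3,B^2)$) forces $u=qcb^n$ with $q$ in a finite set and $c$ squarefree, derives $|ca^{n/m}|\lesssim X^{1/6}$ and $|cb^n|\lesssim X^{1/6}$, and sums $X^{(m+1)/6n}/c^{(m+1)/n}$ over $c\lesssim X^{1/6}$ to get the three regimes of $h(X)$; your two-element interval for $\val_p(u)$ and your sum over squarefree $q$ are the same computation with the finite factor absorbed. The one place you diverge is the lower bound: the paper proves it only when $m+1>n$ (the only case it needs, for $\bZ/2\bZ\times\bZ/2\bZ$), taking $c=1$, coprime $(a,b)$, and the analog of Lemma~\ref{bd5} ($\gcd(A^3,B^2)=\alpha^6\beta$ with $\alpha$ squarefree, $\beta\mid D$) so that Lemma~\ref{lem:S3toS} applies, whereas you sketch a lower bound in all three cases via triples $(a,b,q)$ with $q$ squarefree. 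That extension is plausible and in fact workable: the key point to check is that the twelfth-power part of $\gcd(A^3,B^2)$ remains bounded even when $q$ is large (it does, since squarefree $q$ contributes exactly $p^6$ at each of its primes and, because $n/m=\max(r/2,s/3)$, the remaining contribution to the minimum of $\val_p(A^3)$ and $\val_p(B^2)$ is bounded), so the B\'ezout-plus-bounded-$d$ fiber argument still gives bounded fibers; and the $\log X$ in the case $m+1=n$ comes from a dyadic summation over the size of $\max(|a|^{n/m},|b|^n)$, as you anticipate. So your write-up, if carried out, would actually prove slightly more of the stated proposition than the paper's sketch does.
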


\begin{proof}
We only sketch the proof, as the details are similar to the proof of Proposition~\ref{bd}. We begin with the upper bound. A version of Lemma~\ref{bd2} holds, but now one only has $C_p=1$ for $p \gg 0$ --- the reason for this is that putting an extra $p$ into $u$ only adds two $p$'s to $A$ and three to $B$, and so does not contribute a $p^{12}$ in $\gcd(A^3, B^2)$. The analog of Lemma~\ref{bd3} then says that we can write $t=a/b^m$, with $\gcd(a, b^m)$ not divisible by any $m$th powers, and $u=qcb^n$, where $q$ belongs to a finite set and $c$ is squarefree. The analog of Lemma~\ref{bd4} yields the inequalities $|ca^{n/m}|\lesssim X^{1/6}$ and $|cb^n| \lesssim X^{1/6}$. To count the number of possibilities, note that for any given $c$ there are at most $X^{m/6n}/c^{m/n}$ possibilities for $a$ and $X^{1/6n}/c^{1/n}$ possibilities for $b$, yielding $X^{(1+m)/6n}/c^{(1+m)/n}$ total possibilities. Now integrate over $c$ (up to $X^{1/6}$) to obtain the upper bound.

We now turn to the lower bound. We only treat the case $m+1>n$, as that is the only one we need and the others are a bit more subtle. It turns out that, in this case, we can find enough points with $c=1$ to establish the bound. More precisely, let $\kappa>0$ be a small constant, and consider the set of pairs of coprime integers $(a,b)$ such that $|a^{n/m}|<\kappa X^{1/6}$ and $|b^n|<\kappa X^{1/6}$ and $\Delta \ne 0$. The number of such pairs is $\gtrsim X^{(m+1)/6n}$. The analog of Lemma~\ref{bd5} shows that $\gcd(A^3, B^2)$ is of the form $\alpha^6 \beta$, where $\beta$ divides a fixed integer $D$, and $\alpha$ is square-free. This is sufficient for the application of Lemma~\ref{lem:S3toS}.
\end{proof}

\begin{proposition}
Let $f(t)=-\frac{1}{3} (t^2-t+1)$ and $g(t)=\tfrac{1}{27} (-2t^3+3t^2+3t-2)$. Suppose $E/\bQ$ is an elliptic curve given by an equation $y^2=x^3+Ax+B$. Then all 2-torsion points of $E$ are rational if and only if there exist $u,t \in \bQ$ such that $A=u^2f(t)$ and $B=u^3g(t)$.
\end{proposition}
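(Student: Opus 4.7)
The statement is purely algebraic: the 2-torsion of $E\colon y^2=x^3+Ax+B$ consists of the identity together with the points $(r,0)$ for $r$ a root of $x^3+Ax+B$, so the condition ``$E(\bQ)[2]\cong(\bZ/2\bZ)^2$'' is equivalent to ``$x^3+Ax+B$ splits into linear factors over $\bQ$''. Non-singularity of $E$ gives three distinct roots, and vanishing of the $x^2$ coefficient forces them to sum to $0$. So the plan reduces to proving that the image of the map $\Phi\colon\bQ^2\to\bQ^2$, $(u,t)\mapsto(u^2f(t),u^3g(t))$, is exactly the locus of $(A,B)$ for which $x^3+Ax+B$ has three rational roots.

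For the forward (``if'') direction I would verify the polynomial identity
\begin{displaymath}
x^3 + u^2 f(t)\,x + u^3 g(t) \;=\; \bigl(x + \tfrac{u}{3}(t+1)\bigr)\bigl(x - \tfrac{u}{3}(2-t)\bigr)\bigl(x - \tfrac{u}{3}(2t-1)\bigr)
\end{displaymath}
in $\bQ[u,t,x]$. The three candidate roots $-\tfrac{u}{3}(t+1),\ \tfrac{u}{3}(2-t),\ \tfrac{u}{3}(2t-1)$ visibly sum to $0$; a direct expansion shows that the sum of their pairwise products is $-\tfrac{u^2}{3}(t^2-t+1) = u^2 f(t)$, and that their product is $-\tfrac{u^3}{27}(-2t^3+3t^2+3t-2) = -u^3 g(t)$, so Vieta gives the identity. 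All three roots are manifestly in $\bQ$, so all 2-torsion of $E$ is rational.

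For the reverse (``only if'') direction, write $x^3+Ax+B = (x-\alpha)(x-\beta)(x-\gamma)$ with $\alpha,\beta,\gamma\in\bQ$ distinct and summing to $0$. I would invert the parameterization by choosing an ordering of the roots and setting
\begin{displaymath}
u \,:=\, \beta-\alpha, \qquad t \,:=\, \frac{\gamma-\alpha}{\beta-\alpha},
\end{displaymath}
both of which lie in $\bQ$ with $u\ne 0$. Substituting $\gamma=-\alpha-\beta$ and simplifying, one checks directly that $-\tfrac{u}{3}(t+1)=\alpha$, $\tfrac{u}{3}(2-t)=\beta$, $\tfrac{u}{3}(2t-1)=\gamma$. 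Thus the cubic factors exactly as in the identity of the previous paragraph, and comparing the coefficients of $x$ and $1$ forces $A = u^2 f(t)$ and $B = u^3 g(t)$.

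There is no real obstacle: once one guesses the inverse, the proof is pure Vieta. The only mild subtlety, which the statement sidesteps, is that $\Phi$ is generically $6$-to-$1$ because the six orderings of the roots produce six distinct valid $(u,t)$; this $S_3$-ambiguity is invisible to the biconditional but is precisely what makes Proposition~\ref{bd-var} (applied with $r=2$, $s=3$, hence $n=m=1$ and $m+1>n$) yield the $X^{1/3}$ count needed for $G=\bZ/2\bZ\times\bZ/2\bZ$.
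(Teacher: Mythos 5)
Your proof is correct, and it reaches the same elementary Vieta-style computation as the paper, but by a slightly more direct route. The paper starts from the fact that a curve with full rational $2$-torsion \emph{admits} an equation $y^2=x(x-a)(x-b)$, which is a statement up to isomorphism; it therefore has to carry a change-of-variables scalar $v$ (with $A=v^4f_0(a,b)$, $B=v^6g_0(a,b)$), absorb it using the homogeneity of the depressed coefficients under $(a,b)\mapsto(a/v^2,b/v^2)$, dehomogenize via $u=b$, $t=a/b$, and dispose of the degenerate case $b=0$ by a discriminant argument. You instead observe that the condition is equivalent to the \emph{given} cubic $x^3+Ax+B$ splitting over $\bQ$ with roots summing to zero, verify the factorization identity for $x^3+u^2f(t)x+u^3g(t)$ once and for all, and invert explicitly by $u=\beta-\alpha$, $t=(\gamma-\alpha)/(\beta-\alpha)$; distinctness of the roots (nonsingularity) gives $u\neq0$ for free, so no twist parameter and no edge case are needed. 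The two arguments buy essentially the same thing; yours makes the inverse of the parameterization explicit (and makes the generic $6$-to-$1$ ambiguity visible), while the paper's version lines up more directly with the Kubert-style normal-form computations it uses for the other groups. Your concluding remark that Proposition~\ref{bd-var} then applies with $r=2$, $s=3$, $n=m=1$ matches the paper's use of the result.
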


\begin{proof}
It is well-known that all of the 2-torsion of $E$ is rational if and only if it admits an equation of the form $y^2=x(x-a)(x-b)$ with $a,b \in \bQ$. Changing $x$ to $x+\tfrac{1}{3}(a+b)$, we obtain the equation
\begin{displaymath}
y^2=x^3-\tfrac{1}{3} (a^2-ab+b^2)+\tfrac{1}{27} (-2a^3+3a^2b+3ab^2-2b^3).
\end{displaymath}
We thus see that all of the 2-torsion of $E$ is rational if and only if there exist $a,b,v \in \bQ$ such that
\begin{displaymath}
A=-\frac{v^4}{3} (a^2-ab+b^2), \qquad B=\frac{v^6}{27} (-2a^3+3a^2b+3ab^2-2b^3).
\end{displaymath}
Changing $(a,b)$ to $(a/v^2, b/v^2)$ shows that we can take $v=1$. The above equations are therefore equivalent to $A=u^2f(t)$ and $B=u^3g(t)$ with $u=b$ and $t=a/b$. (Note: a solution with $b=0$ is impossible, as that would give $4A^3+27B^2=0$.)
\end{proof}

\begin{proposition}
Let $G=\bZ/2\bZ \times \bZ/2\bZ$. Then $X^{1/3} \lesssim N'_G(X) \lesssim X^{1/3}$.
\end{proposition}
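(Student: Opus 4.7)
The plan is to combine the explicit parametrization in the previous proposition with Proposition~\ref{bd-var} applied to the specific pair $(f,g)$ given there. By that proposition, the elliptic curves $E/\bQ$ whose full 2-torsion is rational are precisely those admitting a minimal equation $y^2=x^3+Ax+B$ with $(A,B)=(u^2f(t), u^3g(t))$ for some $u,t\in\bQ$, where $f(t)=-\tfrac{1}{3}(t^2-t+1)$ and $g(t)=\tfrac{1}{27}(-2t^3+3t^2+3t-2)$. Hence, if $S(X)$ denotes the set in Proposition~\ref{bd-var} for this choice of $f,g$, we have $N'_G(X)=\#S(X)$.

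Next I would verify the hypotheses of Proposition~\ref{bd-var}. The polynomial $f$ has roots $(1\pm\sqrt{-3})/2$, while $g(t)=\tfrac{1}{27}(t+1)(-2t+1)(-t+2)$ has roots $-1,\tfrac12,2$; these sets are disjoint, so $f$ and $g$ are coprime. Their degrees are $r=2$ and $s=3$, giving
\[
\max\left(\tfrac{r}{2},\tfrac{s}{3}\right)=\max(1,1)=1,
\]
so in the notation of the proposition $n=m=1$ (both coprimality and the assumption that $n=1$ or $m=1$ are trivially satisfied). Since $m+1=2>1=n$, the proposition places us in its first regime, yielding $h(X)=X^{(m+1)/6n}=X^{1/3}$.

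Applying Proposition~\ref{bd-var} directly therefore gives $X^{1/3}\lesssim \#S(X)\lesssim X^{1/3}$, and combining with $N'_G(X)=\#S(X)$ completes the proof. There is no genuine obstacle: once the parametrization is in place, the whole content of the statement is the numerical matching of invariants with the right case of Proposition~\ref{bd-var}. The only point deserving a line of verification is the coprimality of $f$ and $g$ (needed so that Proposition~\ref{bd-var} applies), which is immediate from the root computation above.
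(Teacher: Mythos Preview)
Your proof is correct and follows the same approach as the paper's: combine the parametrization proposition with Proposition~\ref{bd-var}, verify that $r=2$, $s=3$ give $n=m=1$, and read off $(m+1)/6n=1/3$ in the regime $m+1>n$. You simply spell out more details (the coprimality check and the regime determination) than the paper does; note that your displayed factorization of $g$ has a sign slip, though the roots you list are correct and that is all the coprimality argument needs.
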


\begin{proof}
This follows from the above two propositions. Note that, in the notation of Proposition~\ref{bd-var}, we have $m=n=1$, and so $(m+1)/6n=1/3$.
\end{proof}

\section{Asymptotics for the trivial group, $\bZ/2\bZ$, and $\bZ/3\bZ$}\label{sec:asymptotics}

In this section, we derive asymptotics for $N_G(X)$ when $G$ has order at most 3. The result is stated in Theorem~\ref{lem:main_asymp_lemma} below and is a refinement of Theorem~\ref{thm:asymptotics}. We take a unified approach by first counting integer points in explicit semi-algebraic sets using the Principle of Lipschitz \cite{davenport} and then sieving. Note that these asymptotics imply Theorem~\ref{thm:main} for these cases.

\begin{lemma}
Let $E/\bQ$ be an elliptic curve given by $y^2=x^3+Ax+B$ with $A$ and $B$ in $\bZ$. Then,
\begin{enumerate}
	\item\label{lempart:2tors} $E$ has a rational point of order 2 if and only if there exists $b \in \bZ$ such that $B=b^3+Ab$;
	\item\label{lempart:3tors} $E$ has a rational point of order 3 if and only if there exists $a, b \in \bZ$ such that $A=6ab+27 a^4$ and $B=b^2-27a^6$.
\end{enumerate}
\end{lemma}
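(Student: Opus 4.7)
The plan is to handle both parts by directly translating the torsion condition into a polynomial identity in $A, B$, using Nagell--Lutz to guarantee integrality.

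Part (a) is essentially immediate. A rational point of order $2$ on $E$ has the form $(x_0, 0)$ with $x_0$ a rational root of $x^3 + Ax + B$; conversely, any such root yields a $2$-torsion point. Since this polynomial is monic with integer coefficients, the rational root theorem forces $x_0 \in \bZ$. Writing $x_0 = -b$ gives $B = b^3 + Ab$, and conversely such a $b$ produces the $2$-torsion point $(-b, 0)$.

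For part (b), I would use the characterization that a nontrivial $P = (x_0, y_0)$ has order $3$ if and only if $2P = -P$, equivalently the tangent line to $E$ at $P$ meets $E$ only at $P$, with multiplicity $3$. Writing the tangent as $y = mx + c$ with $m = (3x_0^2 + A)/(2y_0)$ and $c = y_0 - mx_0$, the condition becomes
\[
x^3 + Ax + B - (mx + c)^2 = (x - x_0)^3.
\]
Matching coefficients gives $m^2 = 3x_0$, $\; A = 3x_0^2 + 2mc$, and $B = c^2 - x_0^3$. By Nagell--Lutz, $x_0, y_0 \in \bZ$, so $m^2 = 3x_0 \in \bZ$; since $m \in \bQ$ and its square is an integer, $m \in \bZ$; and $3 \mid m^2$ forces $3 \mid m$. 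Setting $a = m/3 \in \bZ$ and $b = c = y_0 - mx_0 \in \bZ$ gives $x_0 = 3a^2$, and the coefficient identities become exactly $A = 6ab + 27a^4$ and $B = b^2 - 27a^6$.

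For the converse, given integers $a, b$ satisfying these two equations, a direct check shows that $P = (3a^2,\, 9a^3 + b)$ lies on $E$, that the tangent to $E$ at $P$ is the line $y = 3ax + b$, and that
\[
x^3 + Ax + B - (3ax + b)^2 = (x - 3a^2)^3.
\]
Hence the tangent cuts out the divisor $3[P]$ on $E$, so $3P = O$; since $P \ne O$ and $3$ is prime, $P$ has order exactly $3$.

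The one subtle point I would flag is the integrality step: from $m \in \bQ$ and $m^2 = 3x_0 \in \bZ$ one gets $m \in \bZ$, and then $3 \mid m^2$ forces $3 \mid m$, so that the ``slope over $3$'' is the integer parameter $a$. Everything else is coefficient matching and a routine verification on the parametric side.
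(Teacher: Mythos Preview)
Your argument is correct. For part~(a) you and the paper say the same thing. For part~(b), the paper does not give an argument at all: it simply cites \cite[\S2]{GST}. Your proof is a self-contained derivation via the tangent-line/inflection characterization of $3$-torsion together with Nagell--Lutz for integrality, and the algebra checks out (including the key divisibility $3\mid m$). One small point you glide over in the converse: to use the non-vertical tangent you need $y_0=9a^3+b\neq 0$, but if $9a^3+b=0$ then a direct computation gives $4A^3+27B^2=0$, contradicting the hypothesis that $E$ is an elliptic curve, so this case cannot occur. With that remark added, your proof is complete and has the advantage over the paper's of being self-contained rather than deferring to an external reference.
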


\begin{proof}
As is well-known, $E$ has a rational 2-torsion point if and only if $x^3+Ax+B$ has a rational root. This yields part (\ref{lempart:2tors}). Part (\ref{lempart:3tors}) is proved in \cite[\S2]{GST}
\end{proof}

This lemma suggests we study integer points in the following three semi-algebraic regions:
\begin{align*}
	R_1(X)&=\{(a,b)\in\bR^2:|a|<X^{1/3}\text{ and }|b|<X^{1/2}\},\\
	R_2(X)&=\{(a,b)\in\bR^2:|a|<X^{1/3}\text{ and }|b^3+ab|<X^{1/2}\}, \textrm{and}\\
	R_3(X)&=\{(a,b)\in\bR^2:|6ab+27a^4|<X^{1/3}\text{ and }|b^2-27a^6|<X^{1/2}\}.
\end{align*}

The Principle of Lipschitz states that the number of integer points $r_{i}(X)$ in $R_i(X)$ is given by its area up to an error given by the maximal length of its projections onto the coordinate axes. From this we immediately see that $r_1(X)=4X^{5/6}+O(X^{1/2})$. The next lemma will allow us to compute the main term and the error for $i=2,3$. Before stating it, we must introduce several quantities.

Let $\alpha_\pm$ be the (unique) real root of $x^3\pm x-1$. Let $f_\pm=3x^4\pm6x^2+12x-1$, each of which has one negative and one positive root. We denote these four roots by
\begin{align*}
	&\alpha_4\approx0.08011\quad\text{and}\quad\alpha_1\approx-1.22259\quad\text{(roots of }f_+);\\
	&\alpha_3\approx0.08711\quad\text{and}\quad\alpha_0\approx-2.01637\quad\text{(roots of }f_-).
\end{align*}
Also, define $\alpha_2=-\sqrt{3}$ and $\alpha_5=\sqrt{3}$. Let $\displaystyle \beta_i=\sgn(\alpha_i)\sqrt{|\alpha_i|/3}$ except take $\beta_3$ to be negative. Then, $\beta_i<\beta_j$ for $i<j$. Define the hyperelliptic integrals
\[
	I_+=\int_{\beta_3}^{\beta_4}\sqrt{1+27a^6}da\approx0.33383
\]
and
\[
	I_-=\int_{\beta_0}^{\beta_1}\sqrt{-1+27a^6}da\approx0.32030.
\]
Finally, let
\[
	A_\pm(a)=\frac{\pm X^{1/3}-27a^4}{6a}\quad\text{ and }\quad B_\pm(a)=\sqrt{\pm X^{1/2}+27a^6}.
\]
We leave the verification of the following lemma to the reader.
\begin{lemma}\mbox{}
	\begin{enumerate}
		\item We have $(a,b)\in R_2(X)$ if and only if
\begin{align*}
& |b|<\alpha_+X^{1/6}\text{ and }|a|^3<X\text{, or} \\
& \alpha_+X^{1/6}\leq|b|<\alpha_-X^{1/6}\text{ and }-X^{1/3}<a<\frac{X^{1/2}}{|b|}-b^2.
\end{align*}
Thus,
\[
	\Area(R_2(1))=2\log(\alpha_-/\alpha_+)+\tfrac{4}{3} (\alpha_++\alpha_-).
\]
		\item Let $R^+_3(X)=R_3(X)\cap\{b\geq0\}$. Then, $(a,b)\in R^+_3(X)$ if and only if
\begin{displaymath}
\beta_iX^{1/12}<a<\beta_{i+1}X^{1/12} \qquad \textrm{and} \qquad g_i(a)<b<f_i(a),
\end{displaymath}
for some $0\leq i\leq 4$, where $f_i(a)$ and $g_i(a)$ are as in the following table.
\begin{center}
{\tabulinesep=1.2mm
\begin{tabu}{|c||c|c|c|c|c|} \hline
$i$ & 0 & 1 & 2 & 3 & 4 \\
\hline
$f_i$ & $A_-(a)$ & $A_-(a)$ & $A_-(a)$ & $B_+(a)$ & $A_+(a)$ \\
\hline
$g_i$ & $B_-(a)$ & $A_+(a)$ & 0 & 0 & 0 \\
\hline
\end{tabu}
}
\end{center}
Thus,
\[
	\Area(R_3^+(1))=I_+-I_-+\frac{1}{6}\log\left(\displaystyle\frac{\beta_0\beta_1\beta_5}{\beta_2\beta_3\beta_4}\right)+\frac{9}{8}\left(\beta_0^4+\beta_2^4+\beta_4^4-\beta_1^4-\beta_3^4-\beta_5^4\right).
\]
Furthermore, $(a,b)\in R_3(X)$ if and only if $(-a,-b)\in R_3(X)$. In particular, $|b|<2\sqrt{7}X^{1/4}$ for all $(a,b)\in R_3(X)$.
	\end{enumerate}
\end{lemma}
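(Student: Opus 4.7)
The verification is a direct computation: for each of the two regions, I would first determine the set-theoretic description by a case analysis on the defining inequalities, and then integrate to obtain the area formula. It is convenient to scale out $X$ throughout (writing $b=\beta X^{1/6}$, $a=\gamma X^{1/3}$ for $R_2$, and $a=\alpha X^{1/12}$, $b=\mu X^{1/4}$ for $R_3$), so the boundary becomes independent of $X$ and the area satisfies a pure power scaling.

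\emph{Region $R_2$.} After scaling, the condition $|b^3+ab|<X^{1/2}$ becomes $|\beta|\,|\gamma+\beta^2|<1$. For fixed $\beta\neq 0$ this pins $\gamma$ to an interval of length $2/|\beta|$ centred at $-\beta^2$; intersecting with $|\gamma|<1$ produces the three cases in the lemma, with the transitions occurring when $-\beta^2\pm 1/|\beta|=\pm 1$, i.e., when $|\beta|^3\pm|\beta|=1$. These are precisely the equations defining $\alpha_\pm$. The area is then the sum of three elementary integrals; the cubic-in-$\beta$ term is simplified using $\alpha_\pm^3=1\mp\alpha_\pm$, which yields the identity $\alpha_-^3-\alpha_+^3=\alpha_++\alpha_-$, and the stated formula follows.

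\emph{Region $R_3^+$.} I would parametrize by $a$. The two inequalities defining $R_3$ translate to $b$ lying in the intersection of two intervals whose endpoints are among $A_\pm(a)$, $\pm B_+(a)$, and $\pm B_-(a)$, with $B_-(a)$ real only for $|a|\geq 3^{-1/2}X^{1/12}$. Intersecting with $\{b\geq 0\}$ produces a region whose upper and lower envelopes change at those values of $a$ where two candidate envelopes coincide (or meet $0$). Squaring the equations $A_\pm(a)=B_\pm(a)$ and substituting $y=3a^2$ reduces each to $f_\pm(y)=0$, while the transition $B_-(a)=0$ gives $y=\sqrt{3}$; the six resulting real values of $a$ are the transition points $\beta_0<\beta_1<\cdots<\beta_5$ of the lemma. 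Once the table is in hand, the area is a sum of five integrals; each integrand has an elementary antiderivative (the $A_\pm$ contribute $\tfrac{1}{6}\log|a|$ and quartic-in-$a$ terms), except that $\int_{\beta_3}^{\beta_4}B_+(a)\,da=I_+$ and $\int_{\beta_0}^{\beta_1}B_-(a)\,da=I_-$ are, by definition, the stated hyperelliptic integrals. Collecting the contributions yields the claimed expression.

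The remaining claims are quick. The map $(a,b)\mapsto(-a,-b)$ preserves both $6ab+27a^4$ and $b^2-27a^6$, so $R_3(X)$ is invariant under it. For the $|b|$ bound, the explicit description of $R_3^+$ combined with the symmetry shows that $|a|\leq|\beta_0|X^{1/12}<X^{1/12}$ throughout $R_3(X)$, hence $b^2<X^{1/2}+27a^6\leq 28X^{1/2}$, giving $|b|<2\sqrt{7}X^{1/4}$. The main obstacle is the bookkeeping in the $R_3^+$ case analysis: correctly identifying, in each of the five sub-intervals, which pair of envelopes among $\{A_+,A_-,B_+,B_-,0\}$ is active. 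Once the $\beta_i$ have been pinned down as the envelope intersections, this reduces to checking a single inequality at a test point in each sub-interval, so the difficulty is more organizational than conceptual.
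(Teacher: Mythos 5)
The paper gives no proof of this lemma (it is explicitly left to the reader), so the only issue is whether your verification is correct. Your treatment of $R_2$ is essentially right: the scaling, the interval of length $2/|\beta|$ centred at $-\beta^2$, and the area computation via $\alpha_\pm^3=1\mp\alpha_\pm$ all check out. (Minor slip: both transitions come from the \emph{upper} endpoint $-\beta^2+1/|\beta|$ crossing $+1$ and then $-1$, not from ``$-\beta^2\pm1/|\beta|=\pm1$''; the equations $|\beta|^3\pm|\beta|=1$ you write down are nonetheless the correct ones.)

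For $R_3^+$, however, your identification of $\beta_2$ and $\beta_5$ is wrong, and this case analysis is the entire content of part (2). The locus $B_-(a)=0$ is $27a^6=X^{1/2}$, i.e.\ $3a^2=X^{1/6}$ (so $y=1$ after scaling, not $\sqrt{3}$), and it is not an envelope transition at all: for $a<0$ one checks $B_-<A_+$ there, so the lower envelope is $A_+$ on both sides of $|a|=3^{-1/2}X^{1/12}$. The genuine transitions separating rows $i=1$ and $i=2$, and terminating row $i=4$, occur where $A_+(a)=0$, i.e.\ $27a^4=X^{1/3}$, giving $|a|=3^{-3/4}X^{1/12}$ (equivalently $3a^2=X^{1/6}/\sqrt{3}$): for $a>0,\ b\ge0$ the inequality $6ab+27a^4<X^{1/3}$ already fails at $b=0$ once $27a^4\ge X^{1/3}$, and on the negative side the lower bound $A_+$ crosses $0$ exactly there. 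With $|\beta_2|=|\beta_5|=3^{-3/4}$ the ordering $\beta_0<\beta_1<\dots<\beta_5$, the stated ``if and only if'' description, and the area all verify (numerically one recovers $c_3\approx1.5221$); this also reveals that the paper's $\alpha_2=-\sqrt{3}$, $\alpha_5=\sqrt{3}$ must be read as $\mp1/\sqrt{3}$, a typo which does not affect the displayed area or $c_3$ because $|\beta_2|=|\beta_5|$ cancels from both the logarithmic ratio and the quartic sum --- but your derivation, which reverse-engineers $\sqrt{3}$ from the wrong equation, would neither reproduce nor correct it. Two smaller points: the crossings giving $\beta_1$ and $\beta_3$ come from the mixed equations $A_+=B_-$ and $A_-=B_+$ (not $A_\pm=B_\pm$ with matched signs), and for the two crossings with $a<0$ the reduction to $f_\pm$ uses $y=-3a^2$ (the negative roots $\alpha_0,\alpha_1$), not $y=3a^2$. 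The symmetry statement and the bound $|b|<2\sqrt{7}X^{1/4}$ are fine once the correct region description is in place.
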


Let $(d_1,d_2,d_3)=(6/5,2,3)$ and $(e_1,e_2,e_3)=(2,3,4)$. Note that for each $i$, the region $R_i(X)$ is homogeneous in $X$ in the sense that
\[
	\Area(R_i(X))=X^{1/d_i}\Area(R_i(1)).
\]
Therefore, combining the above lemma with the Principal of Lipschitz yields
\begin{equation}\label{eqn:Ri_count}
	r_i(X)=\Area(R_i(1))X^{1/d_i}+O(X^{1/e_i}).
\end{equation}
All that remains is to address the overcounting that occurs in $R_i(X)$. We do this in three lemmas.
\begin{lemma}
	Define $(A,B)=T_i(a,b)$ by
	\begin{enumerate}
		\item $T_1(a,b)=(a,b)$, if $i=1$;
		\item $T_2(a,b)=(a,b^3+ab)$, if $i=2$;
		\item $T_3(a,b)=(6ab+27a^4,b^2-27a^6)$, if $i=3$.
	\end{enumerate}
	Then, the number of $(a,b)\in R_i(X)\cap\bZ^2$ such that $4A^3+27B^2=0$ is $O(X^{1/6})$.
\end{lemma}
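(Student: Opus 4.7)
The plan is to use the fact that for $A, B \in \bZ$, the condition $4A^3 + 27B^2 = 0$ forces $(A,B) = (-3s^2, \pm 2s^3)$ for some $s \in \bZ_{\ge 0}$ (a quick coprimality argument). Thus the singular locus in $\bZ^2$ is a one-parameter family, and for each $i$ I would reduce the count to (a) finding the fiber $T_i^{-1}(-3s^2, \pm 2s^3)$ in $\bZ^2$ and showing it is uniformly bounded in $s$, and (b) using the constraints defining $R_i(X)$ to bound the range of the parameter.

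Case $i=1$ is trivial: $(a,b) = (-3s^2, \pm 2s^3)$ and $|a| < X^{1/3}$ forces $s \ll X^{1/6}$. For $i=2$, the first coordinate of $T_2(a,b) = (a, b^3+ab)$ gives $a = -3s^2$; the second yields the cubic $b^3 - 3s^2 b \mp 2s^3 = 0$, which factors as $(b \pm s)^2(b \mp 2s) = 0$, so there are at most four integer values of $b$ for each $s$, and again $s \ll X^{1/6}$. For $i=3$, the case $a = 0$ forces $b = 0$ (contributing $O(1)$). Otherwise, solve the first coordinate $6ab+27a^4 = -3s^2$ for $b = -(s^2 + 9a^4)/(2a)$ and substitute into $b^2 - 27a^6 = \pm 2s^3$ to obtain, after clearing denominators,
\begin{displaymath}
s^4 \mp 8 a^2 s^3 + 18 a^4 s^2 - 27 a^8 = 0,
\end{displaymath}
which factors as $(s \mp 3a^2)^3 (s \pm a^2) = 0$. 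Hence $s^2 \in \{a^4, 9a^4\}$ and $b \in \{-5a^3, -9a^3\}$; since $|B| = |b^2 - 27 a^6| \asymp |a|^6$ in either subcase, the bound $|B| < X^{1/2}$ forces $|a| \ll X^{1/12}$, yielding only $O(X^{1/12}) \subseteq O(X^{1/6})$ pairs.

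The main obstacle is the factorization in case $i=3$: the roots $s = \pm 3a^2$ and $s = \pm a^2$ are natural low-degree guesses, but it is the fact that $s = \pm 3a^2$ is a \emph{triple} root that collapses the solution set to just two curves $b = -5a^3$ and $b = -9a^3$. Without this cancellation, one would naively produce a plane curve of degree four in $(a,s)$-space and obtain $O(X^{1/6})$ lattice points only with extra work. Once the factorization is verified (which I would do by direct polynomial division after guessing the roots), the remainder of the proof is routine.
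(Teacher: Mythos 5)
Your argument is correct, and the key identities check out: the parametrization of integral points on $4A^3+27B^2=0$ by $(A,B)=(-3s^2,\pm 2s^3)$, the factorization $b^3-3s^2b\mp 2s^3=(b\pm s)^2(b\mp 2s)$ for $i=2$, and the factorization $s^4\mp 8a^2s^3+18a^4s^2-27a^8=(s\mp 3a^2)^3(s\pm a^2)$ for $i=3$ (which indeed forces $b\in\{-5a^3,-9a^3\}$, hence $B=-2a^6$ or $54a^6$ and $|B|\asymp |a|^6$). The skeleton is the same as the paper's: both proofs rest on the observation that the discriminant-zero locus is a cuspidal curve whose integer points in the box $|A|<X^{1/3}$, $|B|<X^{1/2}$ are controlled by a single parameter of size $O(X^{1/6})$ (the paper phrases this as $a'^3=b'^2$ being a sixth power, you as the parameter $s$). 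The difference is in how the fibers of $T_2$ and $T_3$ are handled: the paper simply invokes B\'ezout to say each $(A,B)$ has at most $3$ (resp.\ $24$) preimages under $T_2$ (resp.\ $T_3$), and multiplies the $O(X^{1/6})$ image count by these constants, with no computation. Your explicit solution of the fiber equations buys more: for $i=3$ the relevant $(a,b)$ are confined to the two cubic curves $b=-5a^3$ and $b=-9a^3$, and the constraint $|b^2-27a^6|<X^{1/2}$ then yields the sharper bound $O(X^{1/12})$, whereas the paper's soft argument gives only $O(X^{1/6})$ (all that is needed). The trade-off is that your route hinges on verifying the triple-root factorization, while the paper's is essentially computation-free.
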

\begin{proof}
	Suppose $4A^3+27B^2=0$. First, consider $i=1$. Then, $4a^3=-27b^2$, which implies $a^\prime=-a/3\in\bZ$ and $b^\prime=b/2\in\bZ$. Since ${a^\prime}^3={b^\prime}^2<X$, the number of $(a,b)$ with discriminant zero is on the order of the number of sixth powers less than $X$, namely it is $O(X^{1/6})$. For $i=2$, B\'{e}zout's theorem says there are at most 3 pairs $(a,b)$ giving $(A,B)$; for $i=3$, the number is 24. Appealing to the argument for $i=1$ again yields the bound $O(X^{1/6})$.
\end{proof}
In view of this lemma, we may (and do) redefine $R_i(X)$ by removing from it the discriminant zero locus, while preserving the truth of \eqref{eqn:Ri_count}.
\begin{lemma}
	The number of $(A,B)$ for which there is more than one $(a,b)\in R_i(X)\cap\bZ^2$ with $(A,B)=T_i(a,b)$ is $O(X^{1/e_i})$.
\end{lemma}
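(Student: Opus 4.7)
The plan is to handle the three values of $i$ separately, since the map $T_i$ has a different algebraic character in each case. For $i=1$ the map $T_1$ is the identity on $\bZ^2$, so every fibre has size one and there is nothing to bound.

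For $i=2$, suppose $(A,B) = T_2(a,b) = T_2(a',b')$ with $(a,b) \ne (a',b')$. The first coordinate of $T_2$ forces $a = a' = A$, and then $b \ne b'$ are two integer roots of the cubic $P(y) = y^3 + A y - B$. Vieta's formulas produce a third integer root $b'' = -(b+b')$ and the identity $A = -(b^2 + b b' + b'^2)$. Since the form $b^2 + b b' + b'^2$ is positive definite, $|A| < X^{1/3}$ forces $|b|, |b'| \ll X^{1/6}$. The number of such pairs $(b,b')$ is $O(X^{1/3})$, and each pair determines $(A,B)$. This yields the required bound $O(X^{1/3}) = O(X^{1/e_2})$.

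For $i=3$ the situation is more delicate because $T_3(-a,-b) = T_3(a,b)$, so every $(A,B)$ in the image of $T_3$ already has two preimages; I therefore read ``more than one preimage'' as meaning more than one orbit under this involution. Eliminating $b$ from $T_3(a,b) = (A,B)$ via $b = (A - 27 a^4)/(6 a)$ and substituting into the second coordinate equation yields the quartic
\[
243\, u^4 + 54 A\, u^2 + 36 B\, u - A^2 = 0, \qquad u = a^2,
\]
so the fibre is governed by the positive integer-square roots of this quartic. An exceptional $(A,B)$ is characterised by the existence of two integers with $a_1^2 \ne a_2^2$ (and suitable $b_1, b_2$) realising the same $(A,B)$. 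Subtracting the two coordinate equations for $(a_1,b_1)$ and $(a_2,b_2)$ gives
\[
 a_1 b_1 - a_2 b_2 = \tfrac{9}{2}(a_2^4 - a_1^4), \qquad b_1^2 - b_2^2 = 27 (a_1^6 - a_2^6),
\]
and substituting $b_1 = (a_2 b_2 + \tfrac{9}{2}(a_2^4 - a_1^4))/a_1$ from the first into the second reduces to a quadratic in $b_2$, non-degenerate because $a_1^2 \ne a_2^2$. Hence each ordered pair $(a_1,a_2)$ contributes at most two choices of $(b_1,b_2)$. The preceding lemma restricts $|a_j| \ll X^{1/12}$ on $R_3(X)$, so the number of exceptional $(A,B)$ is at most $O\bigl((X^{1/12})^2\bigr) = O(X^{1/6})$, comfortably inside the required $O(X^{1/4}) = O(X^{1/e_3})$.

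The main subtlety is the interpretation of ``more than one preimage'' in the $i=3$ case, which must be read modulo the natural involution $(a,b) \mapsto (-a,-b)$; once that is understood, the elimination to a quartic in $u = a^2$ and the quadratic-in-$b_2$ bookkeeping make the bound routine.
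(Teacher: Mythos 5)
Your $i=1$ and $i=2$ cases are correct, and for $i=2$ you take a genuinely different route from the paper. The paper observes that a second preimage $(a,b')$ produces a second rational $2$-torsion point $(-b',0)$, so the curve has full rational $2$-torsion, and then quotes the bound $O(X^{1/3})$ from \S\ref{sec:Zmod2timesZmod2}; your Vieta argument ($a=a'=A$, third root $-(b+b')$, $A=-(b^2+bb'+b'^2)$, hence $|b|,|b'|\ll X^{1/6}$ and $O(X^{1/3})$ pairs) is more elementary and self-contained (it also sidesteps the small point that the equations $E_{A,B}$ counted here need not be minimal, so the paper's appeal to the count of curves with full $2$-torsion strictly requires a word about non-minimal models).

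The $i=3$ case is where the real issue lies, and you have identified it correctly rather than created a gap. Since $T_3(-a,-b)=T_3(a,b)$, the attached $3$-torsion points $(3a^2,\mp(9a^3+b))$ form the same unordered pair, and the paper itself notes that $R_3(X)$ is invariant under $(a,b)\mapsto(-a,-b)$; hence essentially every $(A,B)$ in the image of $R_3(X)\cap\bZ^2$ has at least two preimages (generically exactly two), and as that image has cardinality on the order of $X^{1/3}$, the lemma as literally stated is false for $i=3$. The paper's proof errs at exactly this point: it asserts that \emph{any} second pair $(a',b')$ yields two new rational $3$-torsion points, which fails for $(a',b')=(-a,-b)$. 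Your orbit reinterpretation is the statement the argument actually needs, and your elimination proof of it is sound modulo minor loose ends (the case $a_1=0$, and a line checking that distinct orbits force $a_1^2\neq a_2^2$); in fact, under the orbit reading the paper's torsion argument becomes both valid and stronger: $a_1^2\neq a_2^2$ gives four distinct rational points of order $3$, hence a copy of $(\bZ/3\bZ)^2$ in $E(\bQ)$, which is impossible, so away from the discriminant-zero locus there are no such $(A,B)$ at all. But you must then propagate the correction: with the orbit version one gets $\#\fE_3(X)=\tfrac12 r_3(X)+O(X^{1/4})$ rather than $r_3(X)+O(X^{1/4})$, because the generic $T_3$-fibre in $R_3(X)\cap\bZ^2$ consists of exactly the two points $(a,b)$ and $(-a,-b)$; equivalently $R_3$ should be replaced by $R_3^+$ in Theorem~\ref{lem:main_asymp_lemma}, which halves the constant $c_3$ of Theorem~\ref{thm:asymptotics}. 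So your proposal does not (and cannot) prove the lemma as stated; it proves the corrected statement, and the factor of $2$ must be carried through the final asymptotic.
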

\begin{proof}
	This is non-trivial only for $i=2,3$. Consider first $i=2$. Given an $E/\bQ$ with equation $y^2=x^3+Ax+B$ and an integer $b$ such that $B=b^3+Ab$, one obtains a non-trivial 2-torsion point $(-b,0)\in E(\bQ)$. Therefore, if there is another $b^\prime\in\bZ$ such that $B=b^{\prime3}+Ab^\prime$, then the torsion subgroup contains $\bZ/2\bZ\times\bZ/2\bZ$. But we proved in \S\ref{sec:Zmod2timesZmod2} that the number of such elliptic curves is $O(X^{1/3})$. The case $i=3$ is similar. Indeed, each pair $(a,b)\in\bZ^2$ such that $(A,B)=T_3(a,b)$ yields a pair of non-trivial 3-torsion points $(3a^2, \pm(9a^3+b))\in E(\bQ)$, which are negatives of each other. Ignoring the cases where $ab=0$ (which is $O(X^{1/4})$ many), a second pair $(a^\prime,b^\prime)$ that gives the same $(A,B)$ yields two \emph{new} non-trivial rational 3-torsion points. But this is impossible: an elliptic curve over $\bQ$ cannot contain two copies of $\bZ/3\bZ$ in its Mordell--Weil group.
\end{proof}
Let $\fE_i(X)=\{\text{equations }E_{A,B}:y^2=x^3+Ax+B:(A,B)=T_i(a,b),(a,b)\in R_i(X)\cap\bZ^2\}$. The above lemma implies that $\#\fE_i(X)=r_i(X)+O(X^{1/e_i})$. We now sieve out non-minimal equations to make our way from $\fE_i(X)$ to $N_{\bZ/i\bZ}(X)$.
\begin{theorem}\label{lem:main_asymp_lemma}
	For $i=1,2,$ and $3$,
	\[
		N_{\bZ/i\bZ}=\frac{\Area(R_i(1))}{\zeta(12/d_i)}X^{1/d_i}+O(X^{1/e_i}).
	\]
\end{theorem}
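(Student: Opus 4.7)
The earlier results in this section give $\#\fE_i(X) = \Area(R_i(1))X^{1/d_i} + O(X^{1/e_i})$, and since every isomorphism class of elliptic curves over $\bQ$ has a unique minimal equation, $N'_{\bZ/i\bZ}(X)$ equals the number of pairs $(A, B) \in \fE_i(X)$ for which the equation $y^2 = x^3 + Ax + B$ is minimal. Expanding the indicator of minimality as $\sum_{d \ge 1} \mu(d)\, \mathbf{1}[d^4 \mid A,\ d^6 \mid B]$ yields
\[
    N'_{\bZ/i\bZ}(X) = \sum_{d \geq 1} \mu(d)\, \chi_i(X, d), \qquad \chi_i(X, d) := \#\{(A, B) \in \fE_i(X) : d^4 \mid A,\ d^6 \mid B\}.
\]

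The plan is to identify $\chi_i(X, d)$ with $\#\fE_i(X/d^{12})$ via the scaling map $(A, B) \mapsto (A/d^4, B/d^6)$, apply \eqref{eqn:Ri_count} at $X/d^{12}$, and sum the resulting M\"obius series. One direction of the bijection is easy: if $(A', B') = T_i(a', b')$ with $(a', b') \in R_i(X/d^{12}) \cap \bZ^2$, then $(d^4 A', d^6 B') = T_i(d^\lambda a', d^\mu b')$ lies in $\fE_i(X)$ with the required divisibility, where $(\lambda, \mu) = (4, 6), (4, 2), (1, 3)$ for $i = 1, 2, 3$ respectively. The converse---that $d^4 \mid A$ and $d^6 \mid B$ force $d^\lambda \mid a$ and $d^\mu \mid b$ for any $(a, b)$ with $T_i(a, b) = (A, B)$---is the main content, and reduces (for squarefree $d$) to a prime-by-prime valuation check. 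The case $i = 1$ is trivial; for $i = 2$, comparing $v_p$ of $a$ and $b^3 + ab$ shows that $p^4 \mid a$ together with $p^6 \mid b^3 + ab$ force $p^2 \mid b$.

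The main obstacle is $i = 3$, where one must rule out $p$-adic cancellation in $A = 6ab + 27 a^4$ and $B = b^2 - 27 a^6$. For $p \nmid 6a$, the condition $p^4 \mid A$ forces $b \equiv -9 a^3 / 2 \pmod{p^4}$, whence $B \equiv -27 a^6/4 \pmod{p^4}$; the latter is a $p$-adic unit, contradicting $p^6 \mid B$, so $p \mid a$. For $p = 3$, the valuations of $b^2$ and $27 a^6$ have opposite parities, precluding cancellation, so $p^6 \mid B$ forces $v_3(a) \geq 1$ directly. For $p = 2$, a short computation modulo $4$ shows that $v_2(a) = v_2(b) = 0$ yields $v_2(B) = 1$, incompatible with $p^6 \mid B$. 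Once $p \mid a$ is secured, a further valuation argument on $B$ gives $p^3 \mid b$, establishing the bijection.

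With $\chi_i(X, d) = \Area(R_i(1)) X^{1/d_i} / d^{12/d_i} + O(X^{1/e_i}/d^{12/e_i})$ in hand, and noting that $\chi_i(X, d) = 0$ once $X/d^{12}$ drops below an absolute constant, the M\"obius sum converges absolutely (since $12/d_i \in \{10, 6, 4\}$ and $12/e_i \in \{6, 4, 3\}$ all exceed $1$) to
\[
    \frac{\Area(R_i(1))}{\zeta(12/d_i)}\, X^{1/d_i} + O(X^{1/e_i}),
\]
giving the claimed asymptotic for $N'_{\bZ/i\bZ}(X)$. Finally, $N_{\bZ/i\bZ}(X) = N'_{\bZ/i\bZ}(X) - \sum_{H \supsetneq \bZ/i\bZ} N_H(X)$, and by Theorem~\ref{thm:main2} this difference is $O(X^{1/d_{\min}})$ where $d_{\min} := \min_{H \supsetneq \bZ/i\bZ} d(H)$ equals $2, 3, 6$ for $i = 1, 2, 3$ respectively; in each case $1/d_{\min} \leq 1/e_i$, so the passage from $N'$ to $N$ is absorbed into the stated error.
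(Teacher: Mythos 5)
Your argument is correct and is essentially the paper's proof: the same M\"obius sieve over the rescalings $(A,B)\mapsto(A/d^4,B/d^6)$ applied to the counts $\#\fE_i$, with the same error bookkeeping and the same absorption of the difference between $N'_{\bZ/i\bZ}(X)$ and $N_{\bZ/i\bZ}(X)$ into $O(X^{1/e_i})$ via Theorem~\ref{thm:main2}. The only divergence is that you verify $\chi_i(X,d)=\#\fE_i(X/d^{12})$ by a prime-by-prime valuation analysis on the parameters $(a,b)$ (correct, though at $p=2$ you should also note the trivial mixed-parity case $v_2(a)=0<v_2(b)$, which gives $v_2(B)=0$); the paper gets this identity for free, since by the integral parametrization lemma at the start of the section membership of $(A,B)$ in $\fE_i$ is the isomorphism-invariant condition of possessing a rational point of order $i$, so the rescaled equation automatically lies in $\fE_i(X/d^{12})$ without any divisibility argument on $(a,b)$.
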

\begin{proof}
	Given an equation $E_{A,B}\in\fE_i(X)$, let $d$ be the largest twelfth power dividing $\gcd(A^3,B^2)$. Then, the curve defined by $E_{A,B}$ is also given (uniquely) by the minimal equation $E_{d^{-4}A,d^{-3}B}$. Sieving yields
	\begin{align*}
		N_{\bZ/i\bZ}^\prime(X)	&=\sum_{d=1}^{X^{1/12}}\mu(d)\#\fE_i(d^{-12}X)\\
							&=\sum_{d=1}^{X^{1/12}}\mu(d)\left(\Area(R_i(1))\frac{X^{1/d_i}}{d^{12/d_i}}+O(d^{-12/d_i}X^{1/e_i})\right)\\
							&=\Area(R_i(1))X^{1/d_i}\sum_{d=1}^{X^{1/12}}\frac{\mu(d)}{d^{12/d_i}}+O(X^{1/e_i})\\
							&=\frac{\Area(R_i(1))}{\zeta(12/d_i)}X^{1/d_i}+O(X^{1/e_i}).
	\end{align*}
	Since $N_{\bZ/i\bZ}(X)-N_{\bZ/i\bZ}^\prime(X)$ is within the error, the theorem is proved.
\end{proof}
	As remarked in the introduction, the case $i=1$ already appears in \cite[Lemma~4.3]{brumer}. In \cite[\S2]{grant}, the case $i=2$ is shown with the error bound $O(X^{\frac{1}{3}+\epsilon})$, for all $\epsilon>0$. The case $i=3$ appears to be new, and we have found no further cases in the literature either.


\begin{thebibliography}{GST}

\bibitem[Br]{brumer}
A.~Brumer, The average rank of elliptic curves. I. {\it Invent.\ Math.} {\bf 109} (1992), no.~3, 445--472.

\bibitem[Da]{davenport}
H.~Davenport, On a principle of Lipschitz. {\it J.\ London Math.\ Soc.} {\bf 26}, (1951). 179--183. Corrigendum, {\it J.\ London Math.\ Soc.} {\bf 39} (1964). 580. 

\bibitem[Du]{duke}
W.~Duke, Elliptic curves with no exceptional primes. {\it C.\ R.\ Acad.\ Sci.\ Paris S\'er.\ I Math.} {\bf 325} (1997), no.~8, 813--818. 


\bibitem[Gr]{grant}
D.~Grant, A formula for the number of elliptic curves with exceptional primes. {\it Compositio Math.} {\bf 122} (2000), no.~2, 151--164. 

\bibitem[GST]{GST}
I.~Garc\'ia-Selfa and J.~M.~Tornero, A complete Diophantine characterization of the rational torsion of an elliptic curve. {\it Acta Math.\ Sin.\ (Engl.\ Ser.)} {\bf 28} (2012), no.~1, 83--96.

\bibitem[Ku]{kubert}
D.~S.~Kubert, Universal bounds on the torsion of elliptic curves. {\it Proc.\ London Math.\ Soc.\ (3)} {\bf 33} (1976), no.~2, 193--237.

\bibitem[Ma]{mazur}
B.~Mazur, Modular curves and the Eisenstein ideal. {\it Inst.\ Hautes \'{E}tudes Sci.\ Publ.\ Math.} No.~47 (1977), 33--186.

\bibitem[Mo]{mordell}
L.~J.~Mordell, On the rational solutions of the indeterminate equations of the third and fourth degrees. {\it Proc.\ Camb.\ Philos.\ Soc.} {\bf 21} (1922), 179--192.

\bibitem[Si]{silverman}
J.~H.~Silverman, The arithmetic of elliptic curves.  Graduate Texts in Mathematics, {\bf 106}. Springer--Verlag, New York, 1986.

\bibitem[Sh]{shimura}
G.~Shimura, Introduction to the arithmetic theory of automorphic functions. Kan\^o Memorial Lectures, No.~1. {\it Publications of the Mathematical Society of Japan, No.~11. Iwanami Shoten, Publishers, Tokyo; Princeton University Press, Princeton, N.J.}, 1971.

\bibitem[St]{sage} W.~Stein et al., Sage Mathematics Software (Version 5.12),
   The Sage Development Team, 2013, http://www.sagemath.org.


\end{thebibliography}
\end{document}